\numberwithin{equation}{section}
\newtheorem{theorem}{Theorem}[section]
\newtheorem{proposition}[theorem]{Proposition}
\theoremstyle{remark}
\newtheorem{remark}[theorem]{Remark}
\theoremstyle{definition}
\newtheorem{example}[theorem]{Example}
\title[Interpolation Hilbert spaces between Sobolev spaces]
{Interpolation Hilbert spaces \\ between Sobolev spaces}
\author[V. A. Mikhailets, A. A. Murach]{Vladimir A. Mikhailets, Aleksandr A. Murach}
\subjclass[2000]{46E35, 46B70}
\thanks{This research was supported by grant no. 03-01-12 of National Academy of Sciences of Ukra\-ine (under the joint Ukrainian--Russian project of NAS of Ukraine and the Siberian Branch of Russian Academy of Sciences).}
\subjclass[2000]{46E35, 46B70}
\begin{document}

\maketitle

\begin{abstract}
We explicitly describe all Hilbert function spaces that are interpolation spaces
with respect to a given couple of Sobolev inner product spaces considered over
$\mathbb{R}^{n}$ or a half-space in $\mathbb{R}^{n}$ or a bounded Euclidean domain
with Lipschitz boundary. We prove that these interpolation spaces form a subclass of
isotropic H\"ormander spaces. They are parametrized with a radial function parameter
which is OR-varying at $+\infty$ and satisfies some additional conditions. We give
explicit examples of intermediate but not interpolation spaces.
\end{abstract}

\section{Introduction}\label{sec1}

The fundamental importance of Sobolev spaces for analysis and the theory of partial differential equations is well known. This importance comes partly from interpolation properties of the Sobolev scale \cite{LionsMagenes72,Triebel95}. Owing
to them, it is possible to extend significant properties of integer order Sobolev
spaces to fractional order spaces. The invariance of spaces with respect to
admissible change of variables is one of these properties and enables the Sobolev
spaces (of any real order) to be well-defined over smooth manifolds (see, e.g., \cite[Sec.~2.6]{Hermander63}).

Nevertheless, the Sobolev scale is not sufficiently fine for a number of
mathematical problems, which explains a natural need to replace the Sobo\-lev spaces
with more general isotropic H\"ormander spaces $H^{\varphi}$
\cite{Hermander63,Hermander83}. They are initially defined over $\mathbb{R}^{n}$
with the help of the Fourier transform and a radial weight function $\varphi$ of the
scalar argument $\langle\xi\rangle:=(1+|\xi|^{2})^{1/2}\geq1$. The use of the power
function $\varphi(t)=t^{s}$ leads to the Sobolev space $H^{(s)}$, with
$s\in\mathbb{R}$. The application of interpolation with a function parameter allows
the classical theory of elliptic boundary-value problems to be completely
transferred from the Sobolev scale to a more extensive class of H\"ormander inner
product spaces (see
\cite{06UMJ3,MikhailetsMurach14,12BJMA2} and references
therein). The latter---refined Sobolev scale---is parametrized with a function
$\varphi$ that varies regularly in J.~Karamata's sense at $+\infty$ with an
arbitrary index $s\in\mathbb{R}$.

Interpolation with a function parameter between spaces of differentiable functions has investigated less completely \cite{CobosFernandez88,Merucci84,Schechter67} than the interpolation between spaces of integrable functions. The purpose of this paper is to describe constructively all Hilbert function spaces that are interpolation spaces with respect to a given couple of Sobolev inner product spaces
\begin{equation}\label{eqa}
\bigl[H^{(s_{0})},H^{(s_{1})}\bigr],\quad-\infty<s_{0}<s_{1}<+\infty,
\end{equation}
considered over $\mathbb{R}^{n}$ or a half-space in $\mathbb{R}^{n}$ or a bounded Euclidean domain. We show that these interpolation spaces form a subclass of
isotropic H\"ormander spaces. This subclass is characterized by the fact that the
function $\varphi$ is OR-varying at $+\infty$ in the sense of V.~G.~Avakumovi\'c
\cite{BinghamGoldieTeugels89,Seneta76} and satisfies two additional conditions,
which involve the numbers $s_{0}$ and $s_{1}$ respectively. The chosen class of
function spaces is broad enough to be effectively used in various problems of the modern analysis \cite[Sec.~2.4]{MikhailetsMurach14}. In particular, we may define the largest class of H\"ormander inner product spaces over a closed compact manifold in such a way that they do not depend on a choice of local charts but possess the interpolation property. These results and their applications will be given in a separate paper.

This paper is organized in the following way. The main result of the paper is
formulated as Theorems \ref{th1} and \ref{th2} in the next section \ref{sec2}. Here
we also define necessary classes of function parameters and Hilbert function spaces.
Section~\ref{sec3} contains auxiliary results. Here we discuss the interpolation
with a function parameter of abstract Hilbert spaces, which is the main tool in our
proofs. The main result is proved in Section~\ref{sec4}. In Section~\ref{sec5} we
investigate the class of all Hilbert spaces that are interpolation spaces between
Sobolev inner product spaces. We show that this class is the complete extension of
the scale $\{H^{(s)}\mid s\in\mathbb{R}\}$ by the interpolation within the category
of Hilbert spaces. In the final section \ref{sec6}, we construct explicit and
mutually complementary examples of Hilbert spaces that are intermediate but not
interpolation spaces for couples of Sobolev spaces. We were not able to find any
examples of this type in the literature.

\section{H\"ormander spaces and the main results}\label{sec2}

Suppose that $\mu:\mathbb{R}^{n}\rightarrow(0,\infty)$ is a Borel measurable weight
function in the following sense: there exist numbers $c\geq1$ and $\ell\geq0$ such that
\begin{equation}\label{eq2}
\frac{\mu(\xi)}{\mu(\eta)}\leq c\,(1+|\xi-\eta|)^{\ell}\quad\mbox{for
arbitrary}\quad\xi,\eta\in\mathbb{R}^{n}.
\end{equation}

By definition (see \cite[Sec.~2.2]{Hermander63} or \cite[Sec.~2]{VolevichPaneah65}), the H\"ormander space $H^{\mu}_{2}(\mathbb{R}^{n})$ consists of all
L.~Schwartz's distributions $u\in\mathcal{S}'(\mathbb{R}^{n})$ that their Fourier
transform $\widehat{u}$ is locally Lebesgue integrable over $\mathbb{R}^{n}$ and
$\mu\,\widehat{u}\in\mathrm{L}_{2}(\mathbb{R}^{n})$. The norm in the complex linear
space $H^{\mu}_{2}(\mathbb{R}^{n})$ is defined by the formula
\begin{equation*}
\|u\|_{H^{\mu}_{2}(\mathbb{R}^{n})}:=
\|\mu\,\widehat{u}\|_{\mathrm{L}_{2}(\mathbb{R}^{n})}.
\end{equation*}

The space $H^{\mu}_{2}(\mathbb{R}^{n})$ is Hilbert with respect to this norm and is
continuously embedded in $\mathcal{S}'(\mathbb{R}^{n})$. This space is separable and
$C^{\infty}_{0}(\mathbb{R}^{n})$ is dense in it. Note that
\begin{equation}\label{f2.2}
H^{\mu}_{2}(\mathbb{R}^{n})\subseteq
H^{\nu}_{2}(\mathbb{R}^{n})\quad\Longleftrightarrow\quad (\nu/\mu\;\;\mbox{is
bounded on}\;\;\mathbb{R}^{n});
\end{equation}
here $\mu$ and $\nu$ are weight functions, and the embedding is continuous.

Among the H\"ormander spaces $H^{\mu}_{2}(\mathbb{R}^{n})$ we only need isotropic
spaces. They correspond to radial weight functions
$\mu(\xi)=\varphi(\langle\xi\rangle)$, where $\varphi$ belongs to the following
class of function parameters.

Let $\mathrm{OR}$ be the class of all Borel measurable functions
$\varphi:\nobreak[1,\infty)\rightarrow(0,\infty)$ for which there exist numbers
$a>1$ and $c\geq1$ such that
\begin{equation}\label{eq3}
c^{-1}\leq\frac{\varphi(\lambda t)}{\varphi(t)}\leq c\quad\mbox{for arbitrary}\quad
t\geq1\quad\mbox{and}\quad\lambda\in[1,a],
\end{equation}
with $a$ and $c$ depending on $\varphi$. Such functions are said to be OR-varying at
$+\infty$. This function class was introduced by V.~G.~Avakumovi\'c in 1936 and has
been comprehensively investigated \cite{BinghamGoldieTeugels89,Seneta76}. We recall some
of its known properties (see, e.g., \cite[Sec.~A.1]{Seneta76}).

\begin{proposition}\label{prop1}
\begin{enumerate}
\item[$\mathrm{(i)}$] If $\varphi\in\mathrm{OR}$, then both the functions $\varphi$ and
$1/\varphi$ are bounded on every compact interval $[1,b]$ with $1<b<\infty$.
\item[$\mathrm{(ii)}$] The following description of the class $\mathrm{OR}$ holds:
\begin{equation*}
\varphi\in\mathrm{OR}\quad\Longleftrightarrow\quad\varphi(t)=
\exp\left(\beta(t)+\int\limits_{1}^{t}\frac{\varepsilon(\tau)}{\tau}\;d\tau\right),
\;\;t\geq1,
\end{equation*}
where the real-valued functions $\beta$ and $\varepsilon$ are Borel measurable and
bounded on $[1,\infty)$.
\item [$\mathrm{(iii)}$] For an arbitrary function
$\varphi:[1,\infty)\rightarrow(0,\infty)$, condition \eqref{eq3} is equivalent to
the following: there exist numbers $s_{0},s_{1}\in\mathbb{R}$, $s_{0}\leq s_{1}$,
and $c\geq1$ such that
\begin{equation}\label{eq4}
\begin{gathered}
\frac{\varphi(t)}{t^{s_{0}}}\leq
c\,\frac{\varphi(\tau)}{\tau^{s_{0}}}\quad\mbox{and}\quad
\frac{\varphi(\tau)}{\tau^{s_{1}}}\leq
c\,\frac{\varphi(t)}{t^{s_{1}}}\\
\mbox{for all}\quad t\geq1\quad\mbox{and}\quad\tau\geq t.
\end{gathered}
\end{equation}
\end{enumerate}
\end{proposition}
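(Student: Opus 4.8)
The plan is to prove the three items in the stated order, deriving (iii) from the representation (ii). Part (i) is a straightforward iteration of \eqref{eq3}: setting $t=1$ gives $c^{-1}\varphi(1)\le\varphi(\lambda)\le c\,\varphi(1)$ for all $\lambda\in[1,a]$, so $\varphi$ and $1/\varphi$ are bounded on $[1,a]$; then by induction on $k$, if $s\in[a^{k},a^{k+1}]$ one writes $s=a t'$ with $t'\in[a^{k-1},a^{k}]$ and applies \eqref{eq3} at $t=t'$ to obtain $c^{-(k+1)}\varphi(1)\le\varphi(s)\le c^{k+1}\varphi(1)$. For a fixed compact interval $[1,b]$, choosing $k$ with $b\le a^{k+1}$ gives both assertions, the bound on $1/\varphi$ being exactly the lower bound on $\varphi$.

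The substance is in (ii). I would pass to the logarithmic variable $x=\log t$ and set $\psi(x):=\log\varphi(e^{x})$ for $x\ge0$; by (i) the function $\psi$ is Borel measurable and bounded on compacts, hence locally integrable, and \eqref{eq3} becomes the \emph{bounded-increments} inequality $|\psi(x+h)-\psi(x)|\le C$ for $x\ge0$, $0\le h\le\alpha$, where $\alpha:=\log a$, $C:=\log c$; telescoping then gives $|\psi(y)-\psi(x)|\le C\lceil(y-x)/\alpha\rceil$ for $y\ge x$. Next I would smooth $\psi$ by the moving average $e(x):=\int_{x}^{x+1}\psi(y)\,dy$: one checks that $\psi-e$ is bounded, since $|\psi(x)-e(x)|\le\sup_{y\in[x,x+1]}|\psi(y)-\psi(x)|\le C\lceil1/\alpha\rceil$, and that $e$ is locally Lipschitz with $e'(x)=\psi(x+1)-\psi(x)$ for a.e.\ $x$, which is again bounded. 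Hence $\psi(x)=b(x)+\int_{0}^{x}\tilde\varepsilon(y)\,dy$ with $b:=\psi-e+e(0)$ and $\tilde\varepsilon:=e'$ bounded and Borel measurable; undoing the substitution ($\tau=e^{y}$, $d\tau/\tau=dy$) and putting $\beta(t):=b(\log t)$, $\varepsilon(\tau):=\tilde\varepsilon(\log\tau)$ gives the asserted formula. The converse in (ii) is one line: if $\varphi(t)=\exp(\beta(t)+\int_{1}^{t}\varepsilon(\tau)\tau^{-1}\,d\tau)$ with $|\beta|\le M$ and $|\varepsilon|\le N$, then $|\log(\varphi(\lambda t)/\varphi(t))|\le2M+N\log\lambda\le2M+N\log a$ for $t\ge1$, $\lambda\in[1,a]$, which is \eqref{eq3}. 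I expect this construction to be the main obstacle: one must manufacture a genuinely \emph{bounded} density $\varepsilon$ out of a merely Borel measurable, a priori unbounded, $\varphi$, and the moving-average smoothing is precisely the device that does it.

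Finally, for (iii): in \eqref{eq4}$\Rightarrow$\eqref{eq3}, substitute $\tau=\lambda t$ with $\lambda\in[1,a]$; the first inequality of \eqref{eq4} gives $\varphi(\lambda t)/\varphi(t)\ge c^{-1}\lambda^{s_{0}}\ge c^{-1}\min\{1,a^{s_{0}}\}$ and the second gives $\varphi(\lambda t)/\varphi(t)\le c\,\lambda^{s_{1}}\le c\max\{1,a^{s_{1}}\}$, which is \eqref{eq3} with a new constant. For \eqref{eq3}$\Rightarrow$\eqref{eq4} I would feed in the representation from (ii): for $\tau\ge t\ge1$ the quantity $\log\bigl(\tau^{-s}\varphi(\tau)/(t^{-s}\varphi(t))\bigr)=-s\log(\tau/t)+\beta(\tau)-\beta(t)+\int_{t}^{\tau}\varepsilon(u)u^{-1}\,du$ lies between $(-N-s)\log(\tau/t)-2M$ and $(N-s)\log(\tau/t)+2M$, so the choice $s_{1}:=N$, $s_{0}:=-N$ makes $t^{-s_{1}}\varphi(t)$ almost decreasing and $t^{-s_{0}}\varphi(t)$ almost increasing, each with constant $e^{2M}$ — this is \eqref{eq4}. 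Alternatively, \eqref{eq3}$\Rightarrow$\eqref{eq4} follows directly by chaining \eqref{eq3} along a geometric progression between $t$ and $\tau$ and counting factors, which yields $s_{1}=-s_{0}=\log c/\log a$.
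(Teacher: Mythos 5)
The paper does not actually prove Proposition~\ref{prop1}; it recalls these facts from the literature on $O$-regular variation (Seneta; Bingham--Goldie--Teugels), so there is no internal proof to compare against. Your argument is correct and is essentially the standard one: part (i) by iterating \eqref{eq3} along powers of $a$; part (ii) by passing to $\psi(x)=\log\varphi(e^{x})$, observing that \eqref{eq3} becomes a bounded-increments condition, and extracting a bounded density via the moving average $e(x)=\int_{x}^{x+1}\psi$ (note that $e(x)-e(0)=\int_{0}^{x}(\psi(y+1)-\psi(y))\,dy$ is a direct computation, so you do not even need a.e.\ differentiation); part (iii) by the substitution $\tau=\lambda t$ in one direction and either the representation or chaining in the other. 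One point to tighten: part (iii) is asserted for an \emph{arbitrary} function $\varphi:[1,\infty)\to(0,\infty)$, with no measurability hypothesis, whereas your primary route for \eqref{eq3}$\Rightarrow$\eqref{eq4} goes through the representation in (ii), whose proof needs $\psi$ to be locally integrable and hence Borel measurable. In that generality only your ``alternative'' chaining argument (writing $\tau/t=a^{m}\mu$ with $\mu\in[1,a)$ and counting the $m+1$ factors, giving $s_{1}=-s_{0}=\log c/\log a$ with constant $c$) is valid, so it should be promoted from an aside to the actual proof of that implication. This is a presentational rather than a mathematical gap, and the constants in your telescoping bound ($\lceil(y-x)/\alpha\rceil$ versus $\lceil(y-x)/\alpha\rceil+1$) are harmless.
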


Condition \eqref{eq4} means that the function $\varphi(t)/t^{s_{0}}$ is equivalent to an increasing function, whereas the function $\varphi(t)/t^{s_{1}}$ is equivalent to a decreasing one on $[1,\infty)$. Here and below we say that positive functions $\psi_{1}$ and $\psi_{2}$ are equivalent on a given set if both $\psi_{1}/\psi_{2}$ and $\psi_{2}/\psi_{1}$ are bounded on it; this property will be denoted by $\psi_{1}\asymp\psi_{2}$.

Setting $\lambda:=\tau/t$ in condition \eqref{eq4} we rewrite it in the equivalent form
\begin{equation}\label{eq5}
c^{-1}\lambda^{s_{0}}\leq\frac{\varphi(\lambda t)}{\varphi(t)}\leq
c\lambda^{s_{1}}\quad\mbox{for all}\quad t\geq1\quad\mbox{and}\quad\lambda\geq1.
\end{equation}
We associate the following notation with $\varphi\in\mathrm{OR}$:
\begin{gather}\label{eq6}
\sigma_{0}(\varphi):=\sup\{s_{0}\in\mathbb{R}\mid\mbox{the left-hand inequality in
\eqref{eq5} holds}\},\\ \label{eq7}
\sigma_{1}(\varphi):=\inf\{s_{1}\in\mathbb{R}\mid\mbox{the right-hand inequality in
\eqref{eq5} holds}\}.
\end{gather}
Evidently, $-\infty<\sigma_{0}(\varphi)\leq\sigma_{1}(\varphi)<\infty$. The numbers
$\sigma_{0}(\varphi)$ and $\sigma_{1}(\varphi)$ are equal to the lower and the upper
Matuszewska indices of $\varphi$, respectively (see \cite{Matuszewska64} and \cite[Theorem 2.2.2]{BinghamGoldieTeugels89}). In particular, if $\varphi$ is regularly varying with an index $s\in\mathbb{R}$, then $\varphi\in\mathrm{OR}$ and $\sigma_{0}(\varphi)=\sigma_{1}(\varphi)=\nobreak s$ (see \cite[Sec. 1.4.2]{BinghamGoldieTeugels89} or \cite[Sec. 1.1]{Seneta76}).

\begin{remark}
The Matuszewska indices are closely connected with the classical Boyd indices \cite{Boyd67}. Let us clarify this connection. Given $\varphi\in\mathrm{OR}$, we define $\varphi(t):=\varphi^{2}(1)/\varphi(t^{-1})$ for $0<t<1$ and obtain a positive function $\varphi$, which is given on $(0,\infty)$ and satisfies the condition
$$
c^{-2}\lambda^{s_{0}}\leq\frac{\varphi(\lambda t)}{\varphi(t)}\leq
c^{2}\lambda^{s_{1}}\quad\mbox{for all}\quad t>0\quad\mbox{and}\quad\lambda\geq1
$$
by virtue of \eqref{eq5}. Consider the function
$$
m_{\varphi}(\lambda):=\sup_{t>0}\,\frac{\varphi(\lambda t)}{\varphi(t)}\quad\mbox{of}\quad\lambda>0.
$$
Then \cite[Chap.~II, \S~1, Subsec.~2]{KreinPetuninSemenov82}
$$
\sigma_{0}(\varphi)= \lim_{\lambda\rightarrow0+}\,\frac{\log
m_{\varphi}(\lambda)}{\log\lambda},\quad \sigma_{1}(\varphi)=
\lim_{\lambda\rightarrow\infty}\,\frac{\log m_{\varphi}(\lambda)}{\log\lambda}.
$$
By definition, the right-hand sides of these equalities are the Boyd indices of the function $m_{\varphi}(\lambda)$.
\end{remark}

Let $\varphi\in\mathrm{OR}$. By definition, $H^{\varphi}(\mathbb{R}^{n})$ is the
Hilbert space $H^{\mu}_{2}(\mathbb{R}^{n})$ with
$\mu(\xi):=\varphi(\langle\xi\rangle)$ for all $\xi\in\mathbb{R}^{n}$. The inner
product in $H^{\varphi}(\mathbb{R}^{n})$ is
\begin{equation*}
(u_{1},u_{2})_{H^{\varphi}(\mathbb{R}^{n})}:=
\int\limits_{\mathbb{R}^{n}}\varphi^{2}(\langle\xi\rangle)
\,\widehat{u_{1}}(\xi)\,\overline{\widehat{u_{2}}(\xi)}\,d\xi.
\end{equation*}
It induces the norm introduced above. Note that the space $H^{\varphi}(\mathbb{R}^{n})$ is well-defined, since the function $\mu(\xi)=\varphi(\langle\xi\rangle)$ of $\xi\in\nobreak\mathbb{R}^{n}$ satisfies \eqref{eq2}. This will be demonstrated in Proposition~\ref{prop7}, Section~\ref{sec3}.

We also introduce necessary function spaces over Euclidean domains. Let $\Omega$ be
a domain in~$\mathbb{R}^{n}$. By definition, the linear space $H^{\varphi}(\Omega)$
consists of the restrictions $v=u\!\upharpoonright\!\Omega$ to $\Omega$ of all
distributions $u\in H^{\varphi}(\mathbb{R}^{n})$. The norm in $H^{\varphi}(\Omega)$
is
\begin{equation*}
\|v\|_{H^{\varphi}(\Omega)}:=\inf\left\{\|u\|_{H^{\varphi}(\mathbb{R}^{n})}\mid u\in
H^{\varphi}(\mathbb{R}^{n}),\;\; u=v\;\;\mbox{in}\;\;\Omega\right\}.
\end{equation*}
The space $H^{\varphi}(\Omega)$ is a separable and Hilbert space with respect to the
above norm because it is a factor space of the separable Hilbert space
$H^{\varphi}(\mathbb{R}^{n})$ by
\begin{equation*}
\left\{w\in H^{\varphi}(\mathbb{R}^{n})\mid
\mathrm{supp}\,w\subseteq\mathbb{R}^{n}\setminus\Omega\right\}.
\end{equation*}

If $\varphi(t)=t^{s}$ with $t\geq1$ for some $s\in\mathbb{R}$, then
$H^{\varphi}(\Omega)$ coincides with the Sobolev space $H^{(s)}(\Omega)$ of order $s$ (we refer to the definition given in \cite[Sec. 4.2.1]{Triebel95}).

\begin{remark}\label{rem}
Let $\varphi,\chi\in\mathrm{RO}$. It follows from \eqref{f2.2} that
$H^{\varphi}(\Omega)=H^{\chi}(\Omega)$ if and only if $\varphi\asymp\chi$ on $[1,\infty]$. Specifically, the latter property yields $\sigma_{j}(\varphi)=\sigma_{j}(\chi)$ for every $j\in\{0,1\}$.
\end{remark}

Let $X:=[X_{0},X_{1}]$ be a couple of complex Hilbert spaces $X_{0}$ and $X_{1}$ that the continuous embedding $X_{1}\hookrightarrow X_{0}$ holds. (This assumption is caused by the fact that we are only interested in the case where $X$ is a couple of inner product Sobolev spaces.) A Hilbert space $H$ is called an interpolation space with respect to (or for) the couple $X$ if the following two conditions are satisfied:
\begin{enumerate}
\item [(i)] $H$ is an intermediate space for this couple; i.e., $X_{1}\subseteq H\subseteq X_{0}$, and the embeddings are continuous;
\item [(ii)] for every linear operator $T$ given on $X_{0}$, the following implication holds: if the restriction of $T$ to $X_{j}$ is a bounded operator on $X_{j}$ for each $j\in\{0,1\}$, then the restriction of $T$ to $H$ is a bounded operator on $H$.
\end{enumerate}

Property (ii) implies the following inequality for norms of the operators:
$$
\|T\|_{H\rightarrow H}\leq c\,\max\,\left\{\,\|T\|_{X_{0}\rightarrow
X_{0}},\;\|T\|_{X_{1}\rightarrow X_{1}}\,\right\},
$$
where $c$ is a positive number independent of $T$ (see \cite[Theorem
2.4.2]{BerghLefstrem76}). If this inequality holds with $c=1$ for every $T$, then the interpolation space $H$ is called exact.

Note that the above properties (i) and (ii) are invariant with respect to the choice
of an equivalent norm on $H$. Therefore, we will describe the interpolation spaces
up to equivalence of norms.

The main result of the paper consists of the following two theorems. Here and below
we suppose that $\Omega$ is either the whole space $\mathbb{R}^{n}$ or an open
half-space in $\mathbb{R}^{n}$ or a bounded domain in $\mathbb{R}^{n}$ with Lipschitz boundary.

\begin{theorem}\label{th1}
Let $-\infty<s_{0}<s_{1}<\infty$. A Hilbert space $H$ is an interpolation space with
respect to the couple of Sobolev spaces $[H^{(s_{0})}(\Omega),H^{(s_{1})}(\Omega)]$
if and only if $H=H^{\varphi}(\Omega)$ up to equivalence of norms for some function parameter $\varphi\in\mathrm{OR}$ that satisfies condition~\eqref{eq5}.
\end{theorem}

\begin{remark}\label{rem1}
Naturally, we mean in this theorem that the number $c\geq1$ in condition \eqref{eq5}
is independent of $t$ and $\lambda$. This condition is equivalent to the following pair
of conditions:
\begin{enumerate}
\item [$\mathrm{(i)}$] $s_{0}\leq\sigma_{0}(\varphi)$ and, moreover,
$s_{0}<\sigma_{0}(\varphi)$ if the supremum in $\eqref{eq6}$ is not attained;
\item [$\mathrm{(ii)}$] $\sigma_{1}(\varphi)\leq s_{1}$ and, moreover,
$\sigma_{1}(\varphi)<s_{1}$ if the infimum in $\eqref{eq7}$ is not attained.
\end{enumerate}
\end{remark}

\begin{remark}\label{rem2.4}
It is useful to note the following. Suppose that a Hilbert space $H$ is an
interpolation space with respect to a given couple of Sobolev spaces
$[H^{(s_{0})}(\Omega),H^{(s_{1})}(\Omega)]$. Then $H$ is an interpolation space for
each wider couple $[H^{(s_{0}-\varepsilon)}(\Omega),H^{(s_{1}+\delta)}(\Omega)]$,
with $\varepsilon,\delta>0$. This result follows immediately from
Theorem~\ref{th1}.
\end{remark}

Let a scale of Hilbert spaces $\{X_{s}\mid s\in\mathbb{R}\}$ be such that
the continuous embedding $X_{s_{1}}\hookrightarrow X_{s_{0}}$ holds whenever $s_{0}<s_{1}$. A
Hilbert space $H$ is called an interpolation space with respect to this scale if $H$
is an interpolation space with respect to a certain couple $[X_{s_{0}},X_{s_{1}}]$ with $s_{0}<s_{1}$.

\begin{theorem}\label{th2}
A Hilbert space $H$ is an interpolation space with respect to the Sobolev scale
$\{H^{(s)}(\Omega)\mid s\in\mathbb{R}\}$ if and only if $H=H^{\varphi}(\Omega)$ up to equivalence of norms for some $\varphi\in\mathrm{OR}$.
\end{theorem}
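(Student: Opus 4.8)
The plan is to reduce Theorem~\ref{th2} to Theorem~\ref{th1} in both directions. For the \emph{sufficiency} direction, suppose $H=H^{\varphi}(\Omega)$ for some $\varphi\in RO$. Since the Matuszewska indices satisfy $-\infty<\sigma_{0}(\varphi)\leq\sigma_{1}(\varphi)<\infty$ by Proposition~\ref{prop1}, I would simply pick real numbers $s_{0}<\sigma_{0}(\varphi)$ and $s_{1}>\sigma_{1}(\varphi)$ (strict inequalities on both sides). Then conditions (i) and (ii) of Theorem~\ref{th1} hold automatically for the couple $[H^{(s_{0})}(\Omega),H^{(s_{1})}(\Omega)]$ — strictness on the side means the question of whether the sup/inf is attained is irrelevant — so Theorem~\ref{th1} gives that $H^{\varphi}(\Omega)$ is an interpolation space for that couple, hence for the Sobolev scale by definition.

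For the \emph{necessity} direction, suppose $H$ is an interpolation space with respect to the Sobolev scale; by definition there exist $s_{0}<s_{1}$ such that $H$ is an interpolation space for the couple $[H^{(s_{0})}(\Omega),H^{(s_{1})}(\Omega)]$. Applying Theorem~\ref{th1} to this couple immediately yields $H=H^{\varphi}(\Omega)$ up to equivalence of norms for some $\varphi\in RO$ satisfying (i) and (ii) for that particular $(s_0,s_1)$; dropping the conditions (i), (ii) we retain exactly the statement $\varphi\in RO$, which is what Theorem~\ref{th2} asserts. So in fact both implications are essentially one-line consequences of Theorem~\ref{th1} together with the finiteness of the Matuszewska indices.

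The only point requiring a word of care — and the one I would present explicitly rather than leave implicit — is the sufficiency direction: one must verify that for \emph{every} $\varphi\in RO$ there \emph{exist} admissible exponents $s_{0}<s_{1}$, i.e.\ that $H^{\varphi}(\Omega)$ sits inside \emph{some} Sobolev couple. This is guaranteed precisely by $\sigma_{0}(\varphi),\sigma_{1}(\varphi)$ being finite real numbers (Proposition~\ref{prop1}(iii) and the definitions \eqref{eq6}--\eqref{eq7}), which lets us choose $s_0$ strictly below $\sigma_0(\varphi)$ and $s_1$ strictly above $\sigma_1(\varphi)$. I do not anticipate any genuine obstacle here: the substance of the work has already been carried out in Theorem~\ref{th1}, and Theorem~\ref{th2} is its natural ``scale-level'' reformulation obtained by quantifying over the choice of couple.
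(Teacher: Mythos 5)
Your proposal is correct and follows exactly the paper's own argument: necessity by applying Theorem~\ref{th1} to the couple furnished by the definition of an interpolation space for the scale, and sufficiency by choosing $s_{0}<\sigma_{0}(\varphi)$ and $s_{1}>\sigma_{1}(\varphi)$ strictly so that conditions (i) and (ii) of Theorem~\ref{th1} hold regardless of whether the supremum or infimum is attained. The point you flag as needing care (finiteness of the Matuszewska indices guaranteeing the existence of an admissible couple) is precisely the step the paper relies on as well.
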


\begin{remark}\label{rem2}
There are functions $\varphi\in\mathrm{OR}$ for which $H^{\varphi}(\mathbb{R}^{n})$
is an intermediate but not an interpolation space for the couple
$[H^{(s_{0})}(\mathbb{R}^{n}),H^{(s_{1})}(\mathbb{R}^{n})]$. See Example \ref{ex3}.
\end{remark}

Theorem~\ref{th2} is a consequence of Theorem~\ref{th1} but is of independent interest. Both of them will be proved in Section~\ref{sec4}.

\section{Auxiliary abstract results}\label{sec3}

The exact Hilbert interpolation spaces with respect to a couple of Hilbert spaces were characterized (isometrically) by W.~F.~Donoghue \cite{Donoghue67} in 1967. V.~I.~Ovchinnikov \cite{Ovchinnikov84} later used this result to describe all Hilbert interpolation spaces, up to equivalence of norms. The theorems of Donoghue and Ovchinnikov are fairly deep, but it seems that they have not attract much attention to users of Hilbert spaces. Here we formulate some results of the interpolation theory in Hilbert spaces, including the Ovchinnikov theorem. It is sufficient to restrict ourselves to separable complex Hilbert spaces.

We say that an ordered couple $[X_{0},X_{1}]$ of Hilbert spaces $X_{0}$ and $X_{1}$ is admissible if these spaces are separable and the dense  continuous embedding $X_{1}\hookrightarrow X_{0}$ holds.

Let us recall the definition of the interpolation of Hilbert spaces with a function parameter. It is a natural generalization of the classical interpolation method of J.-L.~Lions and S.~G.~Krein (see, e.g., \cite[Chapter~1, Sec. 2, 5]{LionsMagenes72} and \cite[Chapter~3, Sec.~10]{KreinPetuninSemenov82}) to the case where a general enough function is used, instead of the number $\theta\in(0,\,1)$, as an interpolation parameter. The generalization appeared in C.~Foia\c{s} and J.-L.~Lions' paper \cite[Section 3.4]{FoiasLions61} and was then studied by several authors.

Following \cite[Sec.~2.1]{08MFAT1}, we denote by $\mathcal{B}$ the set of all Borel measurable functions $\psi:(0,\infty)\rightarrow(0,\infty)$ such that $\psi$ is bounded on each compact interval $[a,b]$ with $0<a<b<\infty$ and, moreover, $1/\psi$ is bounded on every set $[r,\infty)$ with $r>0$.

Let a function $\psi\in\mathcal{B}$ and an admissible couple of Hilbert spaces $X=[X_{0},X_{1}]$ be given. For $X$ there exists an isometric isomorphism $J: X_{1}\leftrightarrow X_{0}$ such that $J$ is a self-adjoint positive operator on $X_{0}$ with the domain $X_{1}$. The operator $J$ is called a generating operator for the couple~$X$. This operator is uniquely determined by $X$. (Indeed, assume that $J_{1}$ is another generating operator for~$X$. Then $J=J_{1}$ because $J$ and $J_{1}$ are positive and metrically equal.)

Using the spectral theorem, we can define an (unbounded) operator $\psi(J)$ in $X_{0}$ as the Borel function $\psi$ of the self-adjoint operator $J$. Let us denote by $[X_{0},X_{1}]_{\psi}$ or simply by $X_{\psi}$ the domain of the operator $\psi(J)$ endowed with the inner product $(u_{1},u_{2})_{X_{\psi}}:=(\psi(J)u_{1},\psi(J)u_{2})_{X_{0}}$ and the corresponding norm $\|u\|_{X_{\psi}}=\|\psi(J)u\|_{X_{0}}$. The space $X_{\psi}$ is Hilbert and separable.

A function $\psi\in\mathcal{B}$ is called an interpolation parameter if the
following condition is fulfilled for all admissible couples $X=[X_{0},X_{1}]$ and
$Y=[Y_{0},Y_{1}]$ of Hilbert spaces and for an arbitrary linear mapping $T$ given on
$X_{0}$: if the restriction of $T$ to $X_{j}$ is a bounded operator
$T:X_{j}\rightarrow Y_{j}$ for each $j\in\{0,1\}$, then the restriction of $T$ to
$X_{\psi}$ is also a bounded operator $T:X_{\psi}\rightarrow Y_{\psi}$.

If $\psi$ is an interpolation parameter, then we say that the Hilbert space
$X_{\psi}$ is obtained by the interpolation of $X$ with the function parameter
$\psi$. In this case, we have the continuous and dense embeddings $X_{1}\hookrightarrow X_{\psi}\hookrightarrow X_{0}$.

The classical result by J.-L.~Lions and S.~G.~Krein consists in that the power
function $\psi(t):=t^{\theta}$ is an interpolation parameter whenever $0<\theta<1$,
the exponent $\theta$ being regarded as a number parameter of the interpolation.

Let us describe the class of all interpolation parameters (in the sense of the above
definition).

Let a function $\psi:(0,\infty)\rightarrow(0,\infty)$ and a number $r\geq0$ be
given, then $\psi$ is called pseudoconcave on the semiaxis $(r,\infty)$ if there
exists a concave function $\psi_{1}:(r,\infty)\rightarrow(0,\infty)$ such that
$\psi(t)\asymp \psi_{1}(t)$ for $t>r$. The function $\psi$ is called pseudoconcave
in a neighbourhood of $+\infty$ if it is pseudoconcave on $(r,\infty)$, where $r$ is
a sufficiently large number.

\begin{proposition}\label{prop2} A function
$\psi\in\mathcal{B}$ is an interpolation parameter if and only if $\psi$ is
pseudoconcave in a neighbourhood of $+\infty$.
\end{proposition}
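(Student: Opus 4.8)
The plan is to prove the two implications separately. Throughout I use that the generating operator $J$ of any admissible couple $X=[X_{0},X_{1}]$ is positive, self-adjoint and bounded below by a positive constant (because $X_{1}\hookrightarrow X_{0}$ is continuous), so that $\psi(J)$, and hence the space $X_{\psi}$, depends only on the restriction of $\psi$ to some ray $[\kappa,\infty)$ with $\kappa>0$. Also I will repeatedly use the quantitative form of the interpolation‑parameter property: if $\psi$ is an interpolation parameter, then for a fixed couple one has $\|T\|_{X_{\psi}\to X_{\psi}}\leq c\max\{\|T\|_{X_{0}\to X_{0}},\|T\|_{X_{1}\to X_{1}}\}$ with $c$ independent of $T$, by a closed‑graph argument exactly like the one giving the norm inequality recorded after the definition of an interpolation space.

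\textit{Necessity.} Suppose $\psi\in\mathcal{B}$ is an interpolation parameter. I would extract information on $\psi$ by testing the defining property on concrete Hilbert couples together with rank one operators: block‑diagonal couples $X=\bigoplus_{n}\bigl[\mathbb{C}^{2},(\mathbb{C}^{2},\mathrm{weights}\ (s_{n},t_{n}))\bigr]$ with $s_{n},t_{n}\geq1$ (these are admissible, with embedding constants bounded by $1$), tested against the block rank one operators $e^{(n)}_{2}\mapsto e^{(n)}_{1}$ and $e^{(n)}_{1}\mapsto e^{(n)}_{2}$, yield $\psi(s_{n})\leq c\,\psi(t_{n})\max\{1,s_{n}/t_{n}\}$ and $\psi(t_{n})\leq c\,\psi(s_{n})\max\{1,t_{n}/s_{n}\}$ with $c$ independent of $n$; running over a dense set of pairs, and supplementing with the multiplication couple $[L_{2}([1,\infty)),L_{2}([1,\infty),\tau^{2}d\tau)]$ (for which $X_{\psi}=L_{2}([1,\infty),\psi^{2}(\tau)d\tau)$ and a similar rank one operator supported on $[s,s+\delta]$, $[t,t+\delta]$, $\delta\to0^{+}$, handles the remaining points), one obtains $\psi(s)\leq c'\psi(t)\max\{1,s/t\}$ for all $s,t$ on a ray $[R,\infty)$. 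This says precisely that near $+\infty$ the function $\psi$ is equivalent to a non‑decreasing one while $\psi(t)/t$ is equivalent to a non‑increasing one; by the classical least‑concave‑majorant construction $\psi$ is then equivalent on a ray to a concave function, i.e.\ pseudoconcave on a neighbourhood of $+\infty$.

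\textit{Sufficiency, Step 1: reduction to the concave case.} Suppose $\psi\in\mathcal{B}$ is pseudoconcave on $(b_{0},\infty)$, say $\psi\asymp\psi_{1}$ there with $\psi_{1}$ concave. Because $\psi\in\mathcal{B}$, both $\psi$ and $1/\psi$ are bounded on each interval $[\varepsilon,b_{0}]$, so $\psi$ is equivalent there to any positive continuous function; gluing such a function to $\psi_{1}$ produces $\psi_{0}$, concave and positive on all of $(0,\infty)$, with $\psi_{0}\asymp\psi$ on every ray $[\varepsilon,\infty)$. Since the spectrum of the generating operator of any admissible couple lies in some $[\kappa,\infty)$, we have $[X_{0},X_{1}]_{\psi}=[X_{0},X_{1}]_{\psi_{0}}$ up to equivalent norms for every admissible couple; hence it suffices to show that every positive concave function on $(0,\infty)$ is an interpolation parameter.

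\textit{Sufficiency, Step 2: the concave case (the main obstacle).} Using the integral representation of a concave function through its non‑increasing right derivative, $\psi_{0}(t)=a+bt+\int_{0}^{\infty}\min\{t,s\}\,d\nu(s)$ with $a,b\geq0$ and $\nu\geq0$; since $\min\{t,s\}\asymp ts/(t+s)$ uniformly in $s$, I may replace $\psi_{0}$ by the equivalent function $a+bt+\int_{0}^{\infty}\frac{t}{t+s}\,d\mu(s)$. Each building block $\psi_{s}(t):=t/(t+s)$ is an interpolation parameter with a constant independent of $s$: if $\|T\|_{X_{j}\to Y_{j}}\leq M$ for $j=0,1$, then, writing $v:=(J_{X}+s)^{-1}u$ and using that the operators $J_{Y}(J_{Y}+s)^{-1}$ and $s(J_{Y}+s)^{-1}$ are contractions on both $Y_{0}$ and $Y_{1}$, a short computation gives $\|\psi_{s}(J_{Y})Tu\|_{Y_{0}}\leq 2M\,\|\psi_{s}(J_{X})u\|_{X_{0}}$. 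The delicate point — where I expect the real work to lie — is the passage from this uniform estimate for the $\psi_{s}$ to the same statement for the superposition $\psi_{0}$: one cannot simply integrate the norm bounds in $s$, since the integral of the norms dominates, rather than is dominated by, the norm of the integral. I would get around this through the spectral formula for the $K$‑functional of a Hilbert couple, which shows that for concave $\psi_{0}$ the space $[X_{0},X_{1}]_{\psi_{0}}$ coincides, up to equivalent norms and a standard change of parameter, with a space produced by the real $K$‑method with a quasiconcave function parameter; the latter is an interpolation functor on all Banach couples, so $T$ maps it boundedly into the corresponding space for $Y$. Hence $\psi_{0}$, and therefore $\psi$, is an interpolation parameter, and together with Step 1 and the necessity part this proves the proposition. (Alternatively, one may invoke the known description of the interpolation parameters of the quadratic Hilbert method, originating in the work of C.~Foia\c{s} and J.-L.~Lions.)
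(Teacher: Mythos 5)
You should first be aware that the paper does not prove Proposition~\ref{prop2} at all: it derives the statement from J.~Peetre's characterization of interpolation functions for weighted $L_{p}$ couples and refers to \cite{08MFAT1} for a written proof. So your attempt has to stand on its own, and as written it has two gaps, one repairable and one essential.

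In the necessity half, the direct sum of two-dimensional weighted couples is the right device, but your globalization of the inequality leaks. The closed-graph argument gives a constant $c$ that depends on the couple, hence on the chosen sequence $(s_{n},t_{n})$; ``running over a dense set of pairs'' therefore yields the Peetre inequality only on that countable set, with a set-dependent constant, and the multiplication couple $[L_{2},L_{2}(\tau^{2}d\tau)]$ controls $\psi$ only up to Lebesgue-null sets. Neither gives the pointwise inequality \eqref{eq9} at \emph{every} point of a ray for a general Borel function $\psi$, which is what Proposition~\ref{prop5} requires for pseudoconcavity. The standard repair is to argue by contradiction: if no constant works, pick pairs violating the inequality with constants $k\to\infty$, build the single direct-sum couple on that sequence, and normalize each block operator by $\min\{1,t_{k}/s_{k}\}$ so that the block-diagonal operator has norm at most $1$ on $X_{0}$ and on $X_{1}$ but is unbounded on $X_{\psi}$.

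In the sufficiency half, Step 1 is correct (it is essentially Proposition~\ref{prop4} of the paper). In Step 2 you honestly isolate the obstruction --- the uniform bound for the blocks $t/(t+s)$ cannot be integrated against $d\mu$ --- but you do not overcome it. The appeal to ``the spectral formula for the $K$-functional'' and to the real $K$-method is precisely the nontrivial theorem here: one must prove both that $K(t,u)$ for a Hilbert couple is computed by the spectral calculus of $J$ and that every positive concave $\psi_{0}$ is equivalent to a function of the form $\bigl(\int_{0}^{\infty}\min\{1,\lambda/s\}^{2}\,d\sigma(s)\bigr)^{1/2}$ so that $X_{\psi_{0}}$ is realized by the $K$-method; none of this is carried out. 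Worse, your proposed ``alternative'' --- to invoke the known description of the interpolation parameters of the quadratic Hilbert method --- is to cite the very statement being proved. So the sufficiency direction is an outline resting on an unproved (or circularly cited) theorem; to the extent that citation is acceptable, you land exactly where the paper does, namely on Peetre \cite{Peetre68} and \cite{08MFAT1}.
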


This fact follows from J.~Peetre's \cite{Peetre68} description of all interpolation functions for the weighted $\mathrm{L}_{p}(\mathbb{R}^{n})$-type spaces (see the monograph \cite[Theorem 5.4.4]{BerghLefstrem76} as well). A~proof of Proposition~\ref{prop2} is given
in, e.g., \cite[Section 2.7]{08MFAT1}. It is useful to note that
$\psi\in\mathcal{B}$ is pseudoconcave in a neighbourhood of $+\infty$ if and only if
$\psi$ is pseudoconcave on every set $(r,\infty)$ with $r>0$. This is evident in
view of the definition of the class $\mathcal{B}$.

Let $X=[X_{0},X_{1}]$ be an admissible couple of Hilbert spaces. V.~I.~Ovchinnikov
\cite[Theorem 11.4.1]{Ovchinnikov84} has described (up to equivalence of norms) all
the Hilbert spaces that are interpolation spaces with respect to $X$. In connection
with our considerations, his result can be restated as follows.

\begin{proposition}\label{prop3}
A Hilbert space $H$ is an interpolation space with respect to $X$ if and only if $H=X_{\psi}$ up to equivalence of norms for some function $\psi\in\mathcal{B}$ that is pseudoconcave in a neighbourhood of $+\infty$.
\end{proposition}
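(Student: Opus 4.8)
The plan is to realize the couple $X=[X_0,X_1]$ concretely through the spectral theorem and then reduce the statement to the classical description of interpolation functions for couples of weighted $L_2$-spaces. (A short route is simply to quote Ovchinnikov's theorem for the existence of $\psi\in\mathcal{B}$ with $H=X_\psi$ and then invoke Proposition~\ref{prop2} to replace $\psi$ by a pseudoconcave function; but it is instructive to indicate the reduction.) Let $J$ be the generating operator of $X$, so that $J$ is self-adjoint and positive on $X_0$, bounded below by some $\kappa>0$, with domain $X_1$ and $\|u\|_{X_1}=\|Ju\|_{X_0}$. By the spectral theorem there are a $\sigma$-finite measure space $(M,\mu)$, a unitary $U\colon X_0\to L_2(M,d\mu)$ and a measurable function $t\colon M\to[\kappa,\infty)$ such that $UJU^{-1}$ is multiplication by $t(\cdot)$; then $U$ maps $X_1$ isometrically onto $L_2(M,t^2\,d\mu)$. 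Transplanting everything by $U$, we may assume $X_0=L_2(M,d\mu)$ and $X_1=L_2(M,t^2\,d\mu)$ with $t\geq\kappa$.

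First I would show that $H$ is, up to equivalence of norms, a weighted space $L_2(M,w\,d\mu)$ whose weight $w\colon M\to(0,\infty)$ is equivalent to a function of $t(\cdot)$. Here one uses that every multiplication operator $M_m$ with $\|m\|_{L_\infty(M)}\le1$ is a contraction on both $X_0$ and $X_1$, hence is bounded on $H$ with a norm bounded independently of $m$; together with the parallelogram identity in the Hilbert space $H$ this pins the norm of $H$ down to weighted-$L_2$ type, i.e.\ $H=L_2(M,w\,d\mu)$ up to equivalence for some measurable $w>0$. Next, each unitary operator on $L_2(M,d\mu)$ induced by a measure-preserving transformation of $M$ that preserves $t(\cdot)$ is an isometry of both $X_0$ and $X_1$, hence is bounded on $H$ together with its inverse; by a measurable selection argument this forces $w$ to be equivalent on $M$ to $\Phi(t(\cdot))$ for some Borel $\Phi\colon[\kappa,\infty)\to(0,\infty)$. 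I extend $\Phi$ to $(0,\infty)$ by making it constant on $(0,\kappa)$ and put $\psi:=\Phi^{1/2}$. From the continuous embeddings $X_1\hookrightarrow H\hookrightarrow X_0$ one gets $\psi(\tau)\le c\,\tau$ and $\psi(\tau)\ge c^{-1}$ on the essential range of $t$, whence $\psi\in\mathcal{B}$; and since $\psi(J)$ corresponds to multiplication by $\psi(t(\cdot))$, we have $H=X_\psi$ up to equivalence of norms.

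It would then remain to check that $\psi$ is pseudoconcave on a neighbourhood of $+\infty$. If $t$ is essentially bounded then $X_0=X_1=H$ with equivalent norms and one may take $\psi\equiv1$, so assume $t$ is essentially unbounded. Suppose $\psi$ were not pseudoconcave near $+\infty$. Applying the Peetre--Calder\'on theory of interpolation functions for couples of weighted $L_2$-spaces (the circle of results behind Proposition~\ref{prop2}) to the model couple $[L_2(\mathbb{R},dx),\,L_2(\mathbb{R},e^{2x}\,dx)]$, whose $\psi$-interpolation space is $L_2(\mathbb{R},\psi(e^x)^2\,dx)$, one obtains an operator bounded on both members of that couple but unbounded on its $\psi$-space. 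Since $t(\cdot)$ takes arbitrarily large values on $M$, this operator can be transplanted onto a portion of $M$ where $t$ is large so as to yield an operator bounded on $X_0$ and on $X_1$ but unbounded on $X_\psi=H$, contradicting the interpolation property of $H$. Hence $\psi$ is pseudoconcave near $+\infty$.

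The step I expect to be the real obstacle is the weighted-$L_2$ reduction in the second paragraph — proving that an abstract Hilbert interpolation space is automatically ``diagonal'' with respect to the generating operator $J$. This is precisely the content of Ovchinnikov's theorem; it relies essentially on the Hilbert-space structure (orthogonality, the parallelogram law), and for couples of Banach spaces the analogue fails.
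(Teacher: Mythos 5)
The paper does not actually prove Proposition~\ref{prop3}: it is presented as a restatement of Ovchinnikov's Theorem 11.4.1, and the reference \cite{Ovchinnikov84} is the entire justification. So you are supplying a proof where the authors supply a citation. Your strategy --- spectral realization of the generating operator, reduction to a couple of weighted $L_2$-spaces, ``diagonalization'' of $H$, then Peetre's criterion for pseudoconcavity --- is indeed the standard route behind the Donoghue--Ovchinnikov description, and your closing remark correctly identifies the diagonalization as the step carrying all the weight.

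That said, two steps of your sketch have genuine gaps as written. First, in the reduction to $L_2(M,w\,d\mu)$ with $w\asymp\Phi(t(\cdot))$: uniform boundedness of the contractive multiplications on $H$ gives that $H$ is, up to equivalence, a solid function space, and the parallelogram law on disjointly supported elements gives $\|f+g\|_H^2\asymp\|f\|_H^2+\|g\|_H^2$; but concluding from this that the norm is of weighted-$L_2$ type is a nontrivial lattice-theoretic fact ($2$-convexity plus $2$-concavity), not a one-line consequence. More seriously, the passage from $w$ to $\Phi(t(\cdot))$ via measure-preserving transformations preserving $t$ breaks down whenever a level set $\{t=\tau\}$ carries atoms of distinct masses (or mixes atomic and continuous parts): no such transformation need exist. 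The standard fix is to use normalized rank-one averaging operators $f\mapsto\mu(A)^{-1}\bigl(\int_A f\,d\mu\bigr)\chi_B$ with $A,B$ contained in sets where $t$ is nearly constant; their norms on $X_0$, $X_1$ and on a candidate weighted space are computable, and the interpolation property then forces $w$ to be controlled by a function of $t$ on the essential range. Second, in the pseudoconcavity step, transplanting an operator from the model couple $[L_2(dx),L_2(e^{2x}dx)]$ presupposes that $t$ takes essentially all large values; if the essential range of $t$ is sparse near $+\infty$, the transplanted operator has nothing to act on. The correct argument again uses the rank-one operators above: a failure of the Peetre inequality $\psi(t)/\psi(s)\leq c\max\{1,t/s\}$ at pairs of points of the essential range produces, after normalization and disjoint summation, a single operator bounded on $X_0$ and $X_1$ but unbounded on $H$. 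This yields the inequality on the essential range only, after which $\psi$ must be redefined off the essential range so as to be pseudoconcave globally (which is possible, and is needed, since nothing so far constrains $\psi$ there). With these repairs your outline becomes a correct proof; as it stands, its two load-bearing steps are asserted rather than established.
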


This proposition will play a key role in the proof of Theorem \ref{th1}, the main result of the paper. In this connection the following property of pseudoconcave functions will be of use.

\begin{proposition}\label{prop5}
Let a function $\psi\in\mathcal{B}$ and a number $r\geq0$ be given. The function
$\psi$ is pseudoconcave on $(r,\infty)$ if and only if there exists a number $c>0$
such that
\begin{equation}\label{eq9}
\frac{\psi(t)}{\psi(\tau)}\leq
c\,\max\left\{1,\,\frac{t}{\tau}\right\}\quad\mbox{for all}\quad t,\tau>r.
\end{equation}
\end{proposition}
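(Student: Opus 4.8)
The plan is to prove the two implications of the equivalence separately.

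\emph{Necessity.} The elementary fact behind this direction is that every positive concave function $g$ on the \emph{whole} half-line $(0,\infty)$ satisfies
$$
g(t)\leq g(\tau)\,\max\Bigl\{1,\,\frac{t}{\tau}\Bigr\}\quad\mbox{for all}\quad t,\tau>0.
$$
Indeed, such a $g$ is necessarily nondecreasing (otherwise concavity would eventually force $g$ to take negative values), which settles the case $t\leq\tau$; and for $t>\tau$ one writes $\tau$ as the convex combination $\frac{\tau-\varepsilon}{t-\varepsilon}\,t+\frac{t-\tau}{t-\varepsilon}\,\varepsilon$ with $0<\varepsilon<\tau$, applies concavity together with $g(\varepsilon)>0$, and lets $\varepsilon\to0^{+}$ to obtain $g(t)\leq(t/\tau)\,g(\tau)$. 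Hence it suffices to produce a positive concave function $g$ on $(0,\infty)$ with $\psi\asymp g$ on $(r,\infty)$: then \eqref{eq9} follows with $c$ equal to the square of the corresponding equivalence constant. If $r=0$, the definition of pseudoconcavity delivers such a $g$ at once. If $r>0$, then pseudoconcavity of $\psi$ on $(r,\infty)$ trivially implies pseudoconcavity on a neighborhood of $+\infty$ (restrict the concave comparison function to a half-line $(r',\infty)$ with $r'$ large), so Proposition~\ref{prop4} applies and, taking $\varepsilon=r$ there, yields a positive concave $g$ on $(0,\infty)$ with $\psi\asymp g$ on $(r,\infty)$.

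\emph{Sufficiency.} Assuming \eqref{eq9}, let $\psi_{1}$ be the least concave majorant of the restriction $\psi\!\upharpoonright\!(r,\infty)$, that is,
$$
\psi_{1}(t):=\sup\Bigl\{\lambda\psi(a)+(1-\lambda)\psi(b):\ a,b>r,\ \lambda\in[0,1],\ \lambda a+(1-\lambda)b=t\Bigr\},\quad t>r.
$$
Taking $a=b=t$ shows $\psi_{1}\geq\psi>0$, and $\psi_{1}$ is concave by the standard properties of the concave envelope. It remains to bound $\psi_{1}$ from above. Fix $t>r$ and a convex combination $\lambda a+(1-\lambda)b=t$ with $0<\lambda<1$ and $a\leq b$; then $r<a\leq t\leq b$, so \eqref{eq9} gives $\psi(a)\leq c\,\psi(t)$ and $\psi(b)\leq c\,\psi(t)\,b/t$, while $(1-\lambda)b=t-\lambda a\leq t$ forces $\lambda+(1-\lambda)b/t\leq 2$; consequently $\lambda\psi(a)+(1-\lambda)\psi(b)\leq 2c\,\psi(t)$, and the endpoint combinations $\lambda\in\{0,1\}$ contribute only the value $\psi(t)$. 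Passing to the supremum, $\psi_{1}(t)\leq 2c\,\psi(t)<\infty$. Thus $\psi\asymp\psi_{1}$ on $(r,\infty)$ with $\psi_{1}$ positive and concave, i.e., $\psi$ is pseudoconcave on $(r,\infty)$.

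The only delicate point is in the necessity part: the definition of pseudoconcavity on $(r,\infty)$ provides a concave comparison function on $(r,\infty)$ only, whereas the clean inequality for $g$ needs concavity all the way down to $0$ --- from concavity on $(r,\infty)$ alone one merely gets $g(t)/g(\tau)\leq(t-r)/(\tau-r)$, which for small $r$ is not comparable to $t/\tau$ with a uniform constant. Proposition~\ref{prop4} (together with the trivial case $r=0$) is precisely what bridges this gap; the rest of the argument is routine.
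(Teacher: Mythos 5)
Your proof is correct and follows essentially the same route as the paper: the necessity is reduced, via Proposition~\ref{prop4}, to the elementary inequality $g(t)\leq g(\tau)\max\{1,t/\tau\}$ for a positive concave function $g$ on $(0,\infty)$, and the sufficiency is Peetre's least-concave-majorant argument, which the paper invokes by citation and you write out in full. The only difference is that you supply the details the paper delegates to \cite{Peetre68}; there are no gaps.
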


In the $r=0$ case, this proposition was proved by J.~Peetre \cite{Peetre68} (see \cite[Theorem 5.4.4]{BerghLefstrem76} as well), the condition $\psi\in\mathcal{B}$ being
superfluous. In the $r>0$ case, the sufficiency is argued analogously; the necessity
is proved in \cite[Lemma 2.2]{08MFAT1} by reduction to the $r=0$ case.

We also need a reiteration theorem for interpolation with a function parameter
\cite[Theorems 2.1 and 2.3]{08MFAT1}.

\begin{proposition}\label{prop6}
Suppose that $f,g,\psi\in\mathcal{B}$ and that $f/g$ is bounded in a neighbourhood
of $+\infty$. Let $X$ be an admissible couple of Hilbert spaces. Then the couple
$[X_{f},X_{g}]$ is admissible, and $[X_{f},X_{g}]_{\psi}=X_{\omega}$ with equality
of norms. Here the function $\omega\in\mathcal{B}$ is given by the formula
$\omega(t):=f(t)\,\psi(g(t)/f(t))$ with $t>0$. Moreover, if $f,g$, and $\psi$ are
interpolation parameters, then $\omega$ is an interpolation parameter as well.
\end{proposition}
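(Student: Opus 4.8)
The plan is to reduce everything to the spectral calculus of the generating operator of $X$ and then verify each assertion by straightforward functional-calculus identities. Let $J$ be the generating operator for the admissible couple $X=[X_0,X_1]$, so $J$ is positive self-adjoint on $X_0$ with domain $X_1$, and $J:X_1\leftrightarrow X_0$ is an isometric isomorphism. First I would observe that, since $f,g\in\mathcal B$, the operators $f(J)$ and $g(J)$ are positive self-adjoint operators on $X_0$ defined by the Borel functional calculus; their domains are $X_f$ and $X_g$ respectively, and by construction $X_f$, $X_g$ are Hilbert spaces with $\|u\|_{X_f}=\|f(J)u\|_{X_0}$ and $\|u\|_{X_g}=\|g(J)u\|_{X_0}$. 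The hypothesis that $f/g$ is bounded near $+\infty$, together with the facts that $f,g$ are bounded on compact subintervals of $(0,\infty)$ and $1/f,1/g$ are bounded on half-lines $[r,\infty)$, gives that $g/f\in\mathcal B$ as well; hence $X_g\hookrightarrow X_f$ continuously (because $g(J)/f(J)=(g/f)(J)$ maps boundedly when restricted appropriately—more precisely one checks $\|u\|_{X_f}=\|f(J)u\|_{X_0}\le C\|g(J)u\|_{X_0}=C\|u\|_{X_g}$ using boundedness of $(f/g)$ near infinity and the $\mathcal B$-properties on the compact part of the spectrum). Separability of $X_g$ is inherited from $X_0$, and density of $X_g$ in $X_f$ follows because the common core $\bigcap_{k}\mathrm{Dom}(J^k)$ (or the spectral subspaces $E([0,N])X_0$) is dense in both. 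This establishes that $[X_f,X_g]$ is an admissible couple.

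Next I would identify the generating operator of the couple $[X_f,X_g]$. I claim it is $B:=(g/f)(J)$, viewed as a positive self-adjoint operator on $X_f$ with domain $X_g$. Indeed, for $u\in X_g$ one computes $\|Bu\|_{X_f}=\|f(J)(g/f)(J)u\|_{X_0}=\|g(J)u\|_{X_0}=\|u\|_{X_g}$, so $B$ is isometric from $X_g$ onto $X_f$; and $B$ is self-adjoint and positive on $X_f$ because it is a positive Borel function of the self-adjoint operator $J$ and the inner product on $X_f$ is the $f(J)$-twisted inner product, under which $(g/f)(J)$ remains self-adjoint. By uniqueness of the generating operator, $B$ is it. Therefore, by the very definition of interpolation with parameter $\psi$, the space $[X_f,X_g]_\psi$ is $\mathrm{Dom}(\psi(B))$ with norm $\|u\|\mapsto\|\psi(B)u\|_{X_f}$. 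Now $\psi(B)=\psi((g/f)(J))=\bigl(\psi\circ(g/f)\bigr)(J)$ by the composition rule for Borel functional calculus, so $\|\psi(B)u\|_{X_f}=\|f(J)\,(\psi\circ(g/f))(J)u\|_{X_0}=\|\omega(J)u\|_{X_0}$ where $\omega(t)=f(t)\,\psi(g(t)/f(t))$. One checks $\omega\in\mathcal B$ from the $\mathcal B$-membership of $f,g,\psi$ and the boundedness of $f/g,g/f$ on the relevant ranges. Hence $[X_f,X_g]_\psi=X_\omega$ with equality of norms.

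Finally, the interpolation-parameter assertion. Suppose $f,g,\psi$ are all interpolation parameters. Given admissible couples $X,Y$ and a linear map $T$ on $X_0$ whose restriction is bounded $T:X_j\to Y_j$ for $j=0,1$: since $f$ and $g$ are interpolation parameters, $T$ is bounded $X_f\to Y_f$ and $X_g\to Y_g$; since $[X_f,X_g]$ and $[Y_f,Y_g]$ are admissible couples (by the first part), and $\psi$ is an interpolation parameter, $T$ is bounded $[X_f,X_g]_\psi\to[Y_f,Y_g]_\psi$, i.e.\ $X_\omega\to Y_\omega$. As $X,Y,T$ were arbitrary, $\omega$ is an interpolation parameter. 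I expect the main obstacle to be the first paragraph—verifying admissibility cleanly, in particular the dense and continuous embedding $X_g\hookrightarrow X_f$, because one must handle separately the behaviour of $f,g$ on the compact part and the tail of the spectrum of $J$, using exactly the three defining properties of $\mathcal B$ together with the boundedness hypothesis on $f/g$; once the correct generating operator $B=(g/f)(J)$ is pinned down, the remaining computations are routine applications of the spectral theorem.
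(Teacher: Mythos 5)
The paper does not prove Proposition~\ref{prop6} at all: it is quoted from \cite{08MFAT1} (Theorems 2.1 and 2.3), so there is no internal proof to compare against. Your spectral-calculus argument is correct and is the standard way to establish this reiteration theorem: identify the generating operator of $[X_f,X_g]$ as $(g/f)(J)$, use the composition rule $\psi\bigl((g/f)(J)\bigr)=(\psi\circ(g/f))(J)$ to get the norm identity $\|\psi(B)u\|_{X_f}=\|\omega(J)u\|_{X_0}$, and then derive the interpolation-parameter statement for $\omega$ by factoring any admissible $T$ through the couples $[X_f,X_g]$ and $[Y_f,Y_g]$. The only points that deserve explicit care in a full write-up are the ones you flag: since $X_1\hookrightarrow X_0$ continuously, $\sigma(J)\subseteq[r_0,\infty)$ for some $r_0>0$, so the $\mathcal{B}$-properties of $f,g$ give $f\geq\delta>0$ and $g/f\geq\rho>0$ on $\sigma(J)$; this is what makes $f(J):X_f\to X_0$ a surjective isometry, pins down the maximal domain of $(g/f)(J)$ as exactly $X_g$, and resolves the domain identities $\mathrm{Dom}\bigl(h(J)f(J)\bigr)=\mathrm{Dom}\bigl((hf)(J)\bigr)$ needed to conclude $\mathrm{Dom}(\psi(B))=X_\omega$ with equality of norms. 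With those details filled in, the argument is complete.
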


At the end of this section we will prove a result, which shows that
$H^{\varphi}(\mathbb{R}^{n})$ is well-defined.

\begin{proposition}\label{prop7}
Let $\varphi\in\mathrm{OR}$; then the function
$\mu(\xi):=\varphi(\langle\xi\rangle)$ of $\xi\in\mathbb{R}^{n}$ satisfies
$\eqref{eq2}$, i.e. $\mu$ is a weight function.
\end{proposition}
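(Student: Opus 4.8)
The plan is to reduce the inequality \eqref{eq2} for $\mu(\xi)=\varphi(\langle\xi\rangle)$ to the scalar estimate \eqref{eq5} governing $\varphi\in RO$, using the elementary geometry of the Japanese bracket $\langle\xi\rangle=(1+|\xi|^{2})^{1/2}$. The key preliminary observation is the standard Peetre-type inequality relating $\langle\xi\rangle$ and $\langle\eta\rangle$: for all $\xi,\eta\in\mathbb{R}^{n}$ one has
$$
\frac{\langle\xi\rangle}{\langle\eta\rangle}\leq\sqrt{2}\,\langle\xi-\eta\rangle\leq 2\,(1+|\xi-\eta|),
$$
which follows from $1+|\xi|^{2}\leq 1+(|\eta|+|\xi-\eta|)^{2}\leq 2(1+|\eta|^{2})(1+|\xi-\eta|^{2})$ after taking square roots. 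By symmetry the same bound holds with $\xi$ and $\eta$ interchanged, so both ratios $\langle\xi\rangle/\langle\eta\rangle$ and $\langle\eta\rangle/\langle\xi\rangle$ are controlled by $2(1+|\xi-\eta|)$.

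Next I would feed this into \eqref{eq5}. By Proposition~\ref{prop1}(iii), the membership $\varphi\in RO$ is equivalent to the existence of $s_{0}\leq s_{1}$ in $\mathbb{R}$ and $c\geq 1$ with
$$
c^{-1}\lambda^{s_{0}}\leq\frac{\varphi(\lambda t)}{\varphi(t)}\leq c\,\lambda^{s_{1}}
\quad\mbox{for each}\quad t\geq 1,\;\lambda\geq 1.
$$
Given $\xi,\eta\in\mathbb{R}^{n}$, I would set $t:=\min\{\langle\xi\rangle,\langle\eta\rangle\}\geq 1$ and $\lambda:=\max\{\langle\xi\rangle,\langle\eta\rangle\}/t\geq 1$, so that $\{\langle\xi\rangle,\langle\eta\rangle\}=\{t,\lambda t\}$. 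Then $\varphi(\langle\xi\rangle)/\varphi(\langle\eta\rangle)$ is either $\varphi(\lambda t)/\varphi(t)$ or its reciprocal; in the first case it is at most $c\,\lambda^{s_{1}}$, and in the second case at most $c\,\lambda^{-s_{0}}\leq c\,\lambda^{|s_{0}|}$ (using $\lambda\geq1$). In either case, with $m:=\max\{|s_{0}|,|s_{1}|,s_{1}\}\geq 0$ we get
$$
\frac{\mu(\xi)}{\mu(\eta)}=\frac{\varphi(\langle\xi\rangle)}{\varphi(\langle\eta\rangle)}
\leq c\,\lambda^{m}
\leq c\,\Bigl(\frac{\langle\xi\rangle}{\langle\eta\rangle}+\frac{\langle\eta\rangle}{\langle\xi\rangle}\Bigr)^{m}
\leq c\,\bigl(4(1+|\xi-\eta|)\bigr)^{m}
= c\,4^{m}\,(1+|\xi-\eta|)^{m},
$$
where in the middle step I bounded $\lambda=\max/\min$ by the sum of the two ratios. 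Setting $l:=m$ and replacing $c$ by $c\,4^{m}\geq 1$ yields exactly \eqref{eq2}.

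The only mild subtlety — and the step I would be most careful about — is the packaging of the exponent: one must allow for the possibility that $s_{0}<0$ (so that the reciprocal bound $\lambda^{-s_{0}}$ genuinely grows in $\lambda$) and that $s_{1}$ could in principle be negative as well, which is why I take $l=\max\{|s_{0}|,|s_{1}|,s_{1}\}$ rather than, say, $|s_{1}|$; apart from that, everything is a routine chain of inequalities. I would also note in passing that this argument simultaneously covers the degenerate geometry near the origin, since $\langle\xi\rangle\geq 1$ everywhere, so no separate treatment of small $|\xi|$ is needed — this is precisely the role played by $1+|\xi|^{2}$ in place of $|\xi|^{2}$ in the definition of the bracket, and by Proposition~\ref{prop1}(i) guaranteeing $\varphi$ and $1/\varphi$ are locally bounded so there is no issue at the left endpoint $t=1$.
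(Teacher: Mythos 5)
Your proof is correct and follows essentially the same route as the paper: bound the ratio $\langle\xi\rangle/\langle\eta\rangle$ by a multiple of $1+|\xi-\eta|$ and feed it into \eqref{eq5} via Proposition~\ref{prop1}(iii), splitting into the two cases according to which bracket is larger. The only difference is cosmetic: the paper uses the sharper elementary inequality $|\langle\xi\rangle-\langle\eta\rangle|\leq|\,|\xi|-|\eta|\,|$ to get $\langle\xi\rangle/\langle\eta\rangle\leq 1+|\xi-\eta|$ with constant $1$ instead of your Peetre-type constant $2$, arriving at the exponent $l=\max\{0,-s_{0},s_{1}\}$, which coincides numerically with your $m$.
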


\begin{proof}
Let $\xi,\,\eta\in\mathbb{R}^{n}$. By taking squares it is easy to check the inequality $|\langle\xi\rangle-\nobreak\langle\eta\rangle|\leq|\,|\xi|-|\eta|\,|$.
Therefore, in the $\langle\xi\rangle\geq\langle\eta\rangle$ case, we have
\begin{equation*}
\frac{\langle\xi\rangle}{\langle\eta\rangle}=
1+\frac{\langle\xi\rangle-\langle\eta\rangle}{\langle\eta\rangle}\leq
1+|\xi|-|\eta|\leq1+|\xi-\eta|.
\end{equation*}
Then, by Proposition~\ref{prop1} (iii), we may write
\begin{equation*}
\frac{\varphi(\langle\xi\rangle)}{\varphi(\langle\eta\rangle)}\leq
c\,\left(\frac{\langle\xi\rangle}{\langle\eta\rangle}\right)^{s_{1}}\leq
c\,\left(1+|\xi-\eta|\right)^{\max\{0,s_{1}\}}.
\end{equation*}
Besides, if $\langle\eta\rangle\geq\langle\xi\rangle$, then
\begin{equation*}
\frac{\varphi(\langle\xi\rangle)}{\varphi(\langle\eta\rangle)}\leq
c\,\left(\frac{\langle\xi\rangle}{\langle\eta\rangle}\right)^{s_{0}}=
c\,\left(\frac{\langle\eta\rangle}{\langle\xi\rangle}\right)^{-s_{0}}\leq
c\,\left(1+|\xi-\eta|\right)^{\max\{0,-s_{0}\}}.
\end{equation*}
Thus
\begin{equation*}
\frac{\varphi(\langle\xi\rangle)}{\varphi(\langle\eta\rangle)}\leq
c\,(1+|\xi-\eta|)^{\ell}\quad\mbox{for all}\quad\xi,\eta\in\mathbb{R}^{n},
\end{equation*}
with $\ell:=\max\{0,-s_{0},s_{1}\}$. This yields \eqref{eq2} for a certain constant
$c\geq1$.\qed
\end{proof}

\begin{remark}\label{rem3.6}
There are Borel measurable functions
$\varphi:\nobreak[1,\infty)\rightarrow(0,\infty)$ such that
$\varphi\notin\mathrm{OR}$ but $\mu(\xi):=\varphi(\langle\xi\rangle)$ is a weight
function of $\xi\in\mathbb{R}^{n}$ (see Example \ref{ex2}).
\end{remark}

\section{Proof of the main result}\label{sec4}

Beforehand, we prove two theorems. In the first one we describe the space which is a
result of the interpolation between Sobolev spaces provided the function
interpolation parameter is used.

\begin{theorem}\label{lem1}
Let numbers $s_{0},s_{1}\in\mathbb{R}$ be such that $s_{0}<s_{1}$, and let a function $\psi\in\mathcal{B}$ be an interpolation parameter. Set
\begin{equation}\label{eq10}
\varphi(t):=t^{s_{0}}\,\psi(t^{s_{1}-s_{0}})\quad\mbox{for}\quad t\geq1.
\end{equation}
Then $\varphi\in\mathrm{OR}$ and
\begin{equation}\label{eq12}
[H^{(s_{0})}(\Omega),H^{(s_{1})}(\Omega)]_{\psi}=H^{\varphi}(\Omega)
\end{equation}
up to equivalence of norms. If $\Omega=\mathbb{R}^{n}$, then \eqref{eq12} holds with
equality of norms.
\end{theorem}

\begin{proof}
First we prove that $\varphi\in\mathrm{OR}$. By definition, the function $\varphi$
is Borel measurable on $[1,\infty)$. Let us prove that $\varphi$ satisfies
\eqref{eq3}. Since the function $\psi\in\mathcal{B}$ is an interpolation parameter,
it is pseudoconcave on $(r,\infty)$, where we may put $r=1$ (see
Proposition~\ref{prop2} and the comment to it). Hence, according to
Proposition~\ref{prop5}, we may write
\begin{gather}\label{eq13}
\frac{\varphi(\lambda t)}{\varphi(t)}= \lambda^{s_{0}}\,\frac{\psi((\lambda
t)^{s_{1}-s_{0}})}{\psi(t^{s_{1}-s_{0}})}\leq
\lambda^{s_{0}}\,c\,\max\{1,\lambda^{s_{1}-s_{0}}\}=c\,\lambda^{s_{1}},\\
\frac{\varphi(t)}{\varphi(\lambda t)}=
\lambda^{-s_{0}}\,\frac{\psi(t^{s_{1}-s_{0}})}{\psi((\lambda t)^{s_{1}-s_{0}})}\leq
\lambda^{-s_{0}}\,c\,\max\{1,\lambda^{s_{0}-s_{1}}\}=
c\,\lambda^{-s_{0}}\label{eq14}
\end{gather}
for arbitrary $t\geq1$, $\lambda\geq1$, and a certain number $c>0$ that is independent of $t$ and $\lambda$. This shows \eqref{eq5}; therefore, $\varphi$ satisfies \eqref{eq3} with $a=2$ and, hence, belongs to $\mathrm{OR}$.

Let us prove that
\begin{equation}\label{eq11}
[H^{(s_{0})}(\mathbb{R}^{n}),H^{(s_{1})}(\mathbb{R}^{n})]_{\psi}=
H^{\varphi}(\mathbb{R}^{n})
\end{equation}
with equality of norms. The couple of Sobolev spaces
$[H^{(s_{0})}(\mathbb{R}^{n}),H^{(s_{1})}(\mathbb{R}^{n})]$ is admissible. Let $J$
denote the pseudodifferential operator whose symbol is the function
$\langle\xi\rangle^{s_{1}-s_{0}}$ of $\xi\in\mathbb{R}^{n}$. Then $J$ is a
generating operator for this couple. Using the Fourier transform
$\mathcal{F}:H^{(s_{0})}(\mathbb{R}^{n})\leftrightarrow
\mathrm{L}_{2}(\mathbb{R}^{n},\langle\xi\rangle^{2s_{0}}\,d\xi)$, we reduce $J$ to
an operator of multiplication by the function $\langle\xi\rangle^{s_{1}-s_{0}}$.
Hence, $\psi(J)$ is reduced to an operator of multiplication by the function
$\psi(\langle\xi\rangle^{s_{1}-s_{0}})=
\langle\xi\rangle^{-s_{0}}\varphi(\langle\xi\rangle)$. Therefore, we may write the
following:
\begin{align*}
\|u\|_{[H^{(s_{0})}(\mathbb{R}^{n}),H^{(s_{1})}(\mathbb{R}^{n})]_{\psi}}
^{2}&=\|\psi(J)u\|_{H^{(s_{0})}(\mathbb{R}^{n})}^{2}\\
&=\int\limits_{\mathbb{R}^{n}}
|(\widehat{\psi(J)u})(\xi)|^{2}\,\langle\xi\rangle^{2s_{0}}\,d\xi\\
&=\int\limits_{\mathbb{R}^{n}}|\,\psi(\langle\xi\rangle^{s_{1}-s_{0}})\,
\widehat{u}(\xi)|^{2}\,\langle\xi\rangle^{2s_{0}}\,d\xi\\
&=\int\limits_{\mathbb{R}^{n}}\varphi^{2}(\langle\xi\rangle)\:|\widehat{u}(\xi)|^{2}\,
d\xi=\|u\|_{H^{\varphi}(\mathbb{R}^{n})}^{2}
\end{align*}
for every $u\in C^{\infty}_{0}(\mathbb{R}^{n})$. This implies the equality of spaces
\eqref{eq11} as $C^{\infty}_{0}(\mathbb{R}^{n})$ is dense in both of them. (Note
that $C^{\infty}_{0}(\mathbb{R}^{n})$ is dense in the interpolation space
$[H^{(s_{0})}(\mathbb{R}^{n}),H^{(s_{1})}(\mathbb{R}^{n})]_{\psi}$ because
$C^{\infty}_{0}(\mathbb{R}^{n})$ is dense in the Sobolev space
$H^{(s_{1})}(\mathbb{R}^{n})$, which is embedded continuously and densely in the
interpolation space.)

Now formula \eqref{eq12} will be deduced from \eqref{eq11} in the remaining cases considered for $\Omega$, namely, where $\Omega$ is either an open half-space or a bounded domain in $\mathbb{R}^{n}$ with Lipschits boundary (cf. \cite[Sec. 1.1.18]{Triebel06}, where classical interpolation methods are used). Note that the couple of Sobolev spaces in \eqref{eq12} is admissible. Let $R_{\Omega}$ stand for the
operator that restricts distributions $u\in\mathcal{S}'(\mathbb{R}^{n})$ to $\Omega$. We have the surjective bounded operators
\begin{gather}\label{eq15}
R_{\Omega}:\,H^{(s)}(\mathbb{R}^{n})\rightarrow H^{(s)}(\Omega),\quad s\in\mathbb{R},\\
R_{\Omega}:\,H^{\varphi}(\mathbb{R}^{n})\rightarrow H^{\varphi}(\Omega).
\label{eq16}
\end{gather}
Applying the interpolation with the parameter $\psi$, we infer, by \eqref{eq11},
that the boundedness of the operators \eqref{eq15}, with $s\in\{s_{0},s_{1}\}$,
implies boundedness of the operator
\begin{equation*}
R_{\Omega}:\,H^{\varphi}(\mathbb{R}^{n})=
[H^{(s_{0})}(\mathbb{R}^{n}),H^{(s_{1})}(\mathbb{R}^{n})]_{\psi}\rightarrow
[H^{(s_{0})}(\Omega),H^{(s_{1})}(\Omega)]_{\psi}.
\end{equation*}
Hence, since the operator \eqref{eq16} is surjective, we have the continuous embedding
\begin{equation}\label{eq17}
H^{\varphi}(\Omega)\hookrightarrow
[H^{(s_{0})}(\Omega),H^{(s_{1})}(\Omega)]_{\psi}.
\end{equation}

Let us prove the inverse inclusion and its continuity. We need to use a
linear mapping, say $T$, that extends every distribution
$$
u\in\bigcup_{s\in\mathbb{R}}H^{(s)}(\Omega)
$$
to $\mathbb{R}^{n}$ and defines a bounded operator
\begin{equation}\label{eq18}
T:\ H^{(s)}(\Omega)\rightarrow H^{(s)}(\mathbb{R}^{n})\quad\mbox{for each}\quad s\in\mathbb{R}.
\end{equation}
This mapping is constructed by R.~Seeley \cite{Seeley64} in the case where $\Omega$ is a half-space, and by V.~S.~Rychkov \cite{Rychkov99} in the case where $\Omega$ is a bounded domain with Lipschitz boundary. Consider the operators \eqref{eq18} for $s=s_{0}$ and $s=s_{1}$. Since $\psi$ is an interpolation
parameter, their boundedness and formula \eqref{eq11} yield boundedness of the
operator
\begin{equation}\label{eq19}
T:\,[H^{(s_{0})}(\Omega),H^{(s_{1})}(\Omega)]_{\psi}\rightarrow
[H^{(s_{0})}(\mathbb{R}^{n}),H^{(s_{1})}(\mathbb{R}^{n})]_{\psi}=
H^{\varphi}(\mathbb{R}^{n}).
\end{equation}
The product of the bounded operators \eqref{eq16} and \eqref{eq19} gives us the
bounded identity operator
\begin{equation*}
I=R_{\Omega}T:\,[H^{(s_{0})}(\Omega),H^{(s_{1})}(\Omega)]_{\psi}\rightarrow
H^{\varphi}(\Omega).
\end{equation*}
Thus, together with the continuous embedding \eqref{eq17}, we have its continuous
inverse; i.e., \eqref{eq12} holds up to equivalence of norms. \qed
\end{proof}

\begin{theorem}\label{lem2}
Let $s_{0},s_{1}\in\mathbb{R}$, with $s_{0}<s_{1}$, and let $\psi\in\mathcal{B}$.
Suppose that $\varphi$ is defined by \eqref{eq10}. Then $\psi$ is an interpolation
parameter if and only if $\varphi$ satisfies \eqref{eq5} with some number $c\geq1$
that is independent of $t$ and $\lambda$.
\end{theorem}

\begin{proof}
If $\psi$ is an interpolation parameter, then, as we have proved above, the function
$\varphi$ satisfies \eqref{eq13} and \eqref{eq14} for arbitrary $t\geq1$ and
$\lambda\geq1$, i.e. \eqref{eq5} is fulfilled.

Conversely, suppose that $\varphi$ satisfies \eqref{eq5}. Let us prove the
inequality \eqref{eq9} for $\psi$. Considering arbitrary $t\geq\tau\geq1$ and
applying the right-hand inequality in \eqref{eq5}, we may write
\begin{align*}
\frac{\psi(t)}{\psi(\tau)}&=
\frac{t^{-s_{0}/(s_{1}-s_{0})}\,\varphi(t^{1/(s_{1}-s_{0})})}
{\tau^{-s_{0}/(s_{1}-s_{0})}\,\varphi(\tau^{1/(s_{1}-s_{0})})}
\\&\leq\lambda^{-s_{0}/(s_{1}-s_{0})}\,c\,\lambda^{s_{1}/(s_{1}-s_{0})}=c\,\lambda=
c\,\max\left\{1,\,\frac{t}{\tau}\right\};
\end{align*}
here $\lambda:=t/\tau\geq1$, whereas the number $c>0$ does not depend on $t$ and
$\tau$. Analogously, considering any $\tau\geq t\geq1$ and applying the left-hand
inequality in \eqref{eq5}, we may write
\begin{equation*} \frac{\psi(t)}{\psi(\tau)}\leq
\lambda^{s_{0}/(s_{1}-s_{0})}\,c\,\lambda^{-s_{0}/(s_{1}-s_{0})}=c=
c\,\max\left\{1,\,\frac{t}{\tau}\right\},
\end{equation*}
with $\lambda:=\tau/t\geq1$. Thus, the inequality \eqref{eq9} holds for $r=1$.
Hence, we conclude, by Propositions \ref{prop2} and \ref{prop5}, that $\psi$ is an
interpolation parameter. \qed
\end{proof}


\textsl{Proof of Theorem~$\ref{th1}$. Necessity.} Let a Hilbert space $H$ be an interpolation space with respect to the couple of Sobolev
spaces $[H^{(s_{0})}(\Omega),H^{(s_{1})}(\Omega)]$. Then, by Propositions
\ref{prop3}, \ref{prop2}, and Theorem~\ref{lem1}, we conclude that
\begin{equation*}
H=[H^{(s_{0})}(\Omega),H^{(s_{1})}(\Omega)]_{\psi}=H^{\varphi}(\Omega)
\end{equation*}
up to equivalence of norms. Here $\psi\in\mathcal{B}$ is a certain interpolation
function parameter, whereas $\varphi$ is defined by \eqref{eq10}. The function $\varphi$ satisfies \eqref{eq5} in view of Theorem~\ref{lem2} and, hence, belongs to $\mathrm{OR}$. The necessity is proved.

\medskip

\textsl{Sufficiency.} Let a function parameter $\varphi\in\mathrm{OR}$ satisfy condition \eqref{eq5}. Suppose that a Hilbert space $H$
coincides with $H^{\varphi}(\Omega)$ up to equivalence of norms. Starting with
$\varphi$, we construct a Borel measurable function $\psi$ such that \eqref{eq10}
holds. Namely, we set
\begin{equation}\label{eq20}
\psi(\tau):=
\begin{cases}
\;\tau^{-s_{0}/(s_{1}-s_{0})}\,\varphi(\tau^{1/(s_{1}-s_{0})}) &\text{for}\quad\tau\geq1, \\
\;\varphi(1) & \text{for}\quad0<\tau<1.
\end{cases}
\end{equation}
Note that $\psi\in\mathcal{B}$ in view of Proposition \ref{prop1} (i) and condition \eqref{eq5}. (This condition written for $t=1$ yields $\psi(\tau)/\varphi(1)\geq c^{-1}$ for every $\tau\geq1$.) By Theorem~\ref{lem2}, the function $\psi$ is an interpolation parameter. Therefore, applying Theorem~\ref{lem1}, we conclude that
\begin{equation}\label{eq21}
H=H^{\varphi}(\Omega)=[H^{(s_{0})}(\Omega),H^{(s_{1})}(\Omega)]_{\psi}
\end{equation}
up to equivalence of norms. Hence $H$ is an interpolation space with respect to the
couple $[H^{(s_{0})}(\Omega),H^{(s_{1})}(\Omega)]$. The sufficiency is proved. \qed

\medskip

The following proof is simple, but we give it for the sake of completeness.

\medskip

\textsl{Proof of Theorem~$\ref{th2}$. Necessity.} Let a Hilbert space $H$
be an interpolation space with respect to the Sobolev scale
$\{H^{(s)}(\Omega):s\in\mathbb{R}\}$. Then $H$ is an interpolation space with
respect to a certain couple $[H^{(s_{0})}(\Omega),H^{(s_{1})}(\Omega)]$, with
$-\infty<s_{0}<s_{1}<\infty$. Hence, by Theorem~\ref{th1}, we conclude that
$H=H^{\varphi}(\Omega)$ up to equivalence of norms for some $\varphi\in\mathrm{OR}$.
The necessity is proved.

\medskip

\textsl{Sufficiency.} Let $\varphi\in\mathrm{OR}$, and let a Hilbert space $H$ coincide with $H^{\varphi}(\Omega)$ up to equivalence of norms. Choose numbers $s_{0},s_{1}\in\mathbb{R}$ such that $s_{0}<\nobreak\sigma_{0}(\varphi)$ and $\sigma_{1}(\varphi)<\nobreak s_{1}$. Then condition \eqref{eq5} is satisfied. According to Theorem~\ref{th1}, $H$ is an interpolation space with respect to the couple $[H^{(s_{0})}(\Omega),H^{(s_{1})}(\Omega)]$ and, consequently, with respect to the Sobolev scale $\{H^{(s)}(\Omega)\mid
s\in\mathbb{R}\}$. The sufficiency is proved. \qed

\section{The extended Sobolev scale}\label{sec5}

Here we expound two important interpolation properties of the extended Sobolev scale
\begin{equation}\label{eq22}
\{H^{\varphi}(\Omega)\mid\varphi\in\mathrm{OR}\}
\end{equation}
(see Theorems \ref{th3} and \ref{th4} below). Specifically, they explain why we have chosen such a name for the class of H\"ormander spaces \eqref{eq22}.

Owing to the first property, every space in \eqref{eq22} can be obtained by the
interpolation of an appropriate couple of Sobolev spaces provided that a certain
interpolation function parameter is used. Just this result can be a basis for
further applications of the scale \eqref{eq22} in various problems.

\begin{theorem}\label{th3}
Let $\varphi\in\mathrm{OR}$. Choose numbers $s_{0},s_{1}\in\mathbb{R}$ such that
$s_{0}<\sigma_{0}(\varphi)<\sigma_{1}(\varphi)<s_{1}$ and define a function $\psi$
by formula \eqref{eq20}. Then $\psi\in\mathcal{B}$ is an interpolation parameter,
and
\begin{equation}\label{eq23}
[H^{(s_{0})}(\Omega),H^{(s_{1})}(\Omega)]_{\psi}=H^{\varphi}(\Omega)
\end{equation}
up to equivalence of norms. If $\Omega=\mathbb{R}^{n}$, then \eqref{eq23} holds with
equality of norms.
\end{theorem}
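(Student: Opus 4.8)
The plan is to obtain Theorem~\ref{th3} by assembling Lemmas~\ref{lem1} and~\ref{lem2}, along the same lines as the sufficiency part of Theorem~\ref{th1}. The only point that genuinely requires attention is that the specific choice $s_{0}<\sigma_{0}(\varphi)<\sigma_{1}(\varphi)<s_{1}$ forces the two-sided estimate \eqref{eq5}; once this is known, the rest is routine.

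First I would check the elementary facts about $\psi$ from \eqref{eq20}. It is Borel measurable by construction; on $(0,1)$ it equals the positive constant $\varphi(1)$, and on $[1,\infty)$ it is the composition of the continuous map $\tau\mapsto\tau^{1/(s_{1}-s_{0})}$ with $\varphi$, which by Proposition~\ref{prop1}(i) is such that $\varphi$ and $1/\varphi$ are bounded on every compact subinterval of $[1,\infty)$. Hence $\psi$ is bounded on every compact interval $[a,b]\subset(0,\infty)$ and $1/\psi$ is bounded on every ray $[r,\infty)$, i.e. $\psi\in\mathcal{B}$. Substituting $\tau=t^{s_{1}-s_{0}}\geq1$ for $t\geq1$ into the first branch of \eqref{eq20} gives $\psi(t^{s_{1}-s_{0}})=t^{-s_{0}}\varphi(t)$, so the pair $(\varphi,\psi)$ satisfies \eqref{eq10}.

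The crux is then to verify \eqref{eq5} for $\varphi$ with a constant independent of $t$ and $\lambda$. The set of exponents $s$ for which the left-hand inequality in \eqref{eq5} holds (with some constant) is closed downwards, since for $\lambda\geq1$ decreasing the exponent only relaxes that lower bound; thus by \eqref{eq6} the strict inequality $s_{0}<\sigma_{0}(\varphi)$ already ensures the left-hand inequality for the exponent $s_{0}$ with some constant $c_{0}$. Symmetrically, the set of exponents for which the right-hand inequality in \eqref{eq5} holds is closed upwards, so by \eqref{eq7} the strict inequality $\sigma_{1}(\varphi)<s_{1}$ ensures the right-hand inequality for the exponent $s_{1}$ with some constant $c_{1}$. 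Putting $c:=\max\{c_{0},c_{1}\}$ we obtain \eqref{eq5} (equivalently: conditions (i) and (ii) of Theorem~\ref{th1} hold here in their strict, hence unconditional, form, whence \eqref{eq5} by Remark~\ref{rem1}). Now Lemma~\ref{lem2}, applied to the pair $(\varphi,\psi)$ satisfying \eqref{eq10} and \eqref{eq5}, shows that $\psi\in\mathcal{B}$ is an interpolation parameter, and Lemma~\ref{lem1} then yields \eqref{eq23} up to equivalence of norms, with equality of norms when $\Omega=\mathbb{R}^{n}$. I do not expect any real obstacle here; the argument is a direct composition of the two lemmas, and the only step calling for a moment's care is the passage from the strict index inequalities to \eqref{eq5}, which rests on the monotonicity in the exponent of each one-sided power bound in \eqref{eq5} for $\lambda\geq1$.
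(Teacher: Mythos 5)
Your argument is correct and is essentially the paper's own proof: check $\psi\in\mathcal{B}$, observe that the strict inequalities $s_{0}<\sigma_{0}(\varphi)$ and $\sigma_{1}(\varphi)<s_{1}$ force the two-sided bound \eqref{eq5} by the monotonicity in the exponent for $\lambda\geq1$, and then invoke Lemma~\ref{lem2} followed by Lemma~\ref{lem1}. One small repair: the boundedness of $1/\psi$ on rays $[r,\infty)$ does \emph{not} follow from Proposition~\ref{prop1}(i), which concerns only compact intervals; it follows from the left-hand inequality of \eqref{eq5} specialized to $t=1$, which gives $\varphi(\lambda)\geq c^{-1}\varphi(1)\lambda^{s_{0}}$ and hence $\psi(\tau)\geq\min\{\varphi(1),c^{-1}\varphi(1)\}>0$ for all $\tau>0$ (this is exactly how the paper argues), so since you establish \eqref{eq5} in your next paragraph anyway, only the ordering and attribution of that step need adjusting.
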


\begin{proof}
Note that $\psi$ belongs to $\mathcal{B}$ and is an interpolation parameter; this
has been demonstrated in the proof of Theorem~\ref{th1} (sufficiency). Now, since
$\varphi$ satisfies \eqref{eq10}, Theorem~\ref{th3} is a direct corollary of Theorem~\ref{lem1}. \qed
\end{proof}

The second property reveals that the class of spaces \eqref{eq22} is
closed with respect to interpolation with a function parameter.

\begin{theorem}\label{th4}
Let functions $\varphi_{0},\varphi_{1}\in\mathrm{OR}$ and $\psi\in\mathcal{B}$ be
given. Suppose that $\varphi_{0}/\varphi_{1}$ is bounded in a neighbourhood of
$+\infty$ and that $\psi$ is an interpolation parameter. Set
\begin{equation*}
\varphi(t):=\varphi_{0}(t)\,\psi
\left(\frac{\varphi_{1}(t)}{\varphi_{0}(t)}\right)\quad\mbox{for}\quad t\geq1.
\end{equation*}
Then $\varphi\in\mathrm{OR}$, and
\begin{equation}\label{eq24}
[H^{\varphi_{0}}(\Omega),H^{\varphi_{1}}(\Omega)]_{\psi}= H^{\varphi}(\Omega)
\end{equation}
up to equivalence of norms. If $\Omega=\mathbb{R}^{n}$, then \eqref{eq24} holds with
equality of norms.
\end{theorem}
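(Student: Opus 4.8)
The plan is to reduce Theorem~\ref{th4} to the already-proved Lemma~\ref{lem1} and the reiteration Proposition~\ref{prop6}. First I would pick, using Theorem~\ref{th3}, a single couple of Sobolev spaces that simultaneously represents both $H^{\varphi_0}(\Omega)$ and $H^{\varphi_1}(\Omega)$: choose $s_0,s_1\in\mathbb{R}$ with $s_0<\sigma_0(\varphi_j)\leq\sigma_1(\varphi_j)<s_1$ for $j=0,1$ (possible since there are only two functions), and set $f(\tau):=\tau^{-s_0/(s_1-s_0)}\varphi_0(\tau^{1/(s_1-s_0)})$, $g(\tau):=\tau^{-s_0/(s_1-s_0)}\varphi_1(\tau^{1/(s_1-s_0)})$ for $\tau\geq1$, extended by the constants $\varphi_0(1)$, $\varphi_1(1)$ on $(0,1)$ as in \eqref{eq20}. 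By Theorem~\ref{th3}, $f$ and $g$ lie in $\mathcal{B}$, are interpolation parameters, and satisfy $[H^{(s_0)}(\Omega),H^{(s_1)}(\Omega)]_{f}=H^{\varphi_0}(\Omega)$ and $[H^{(s_0)}(\Omega),H^{(s_1)}(\Omega)]_{g}=H^{\varphi_1}(\Omega)$ up to norm equivalence (with equality of norms when $\Omega=\mathbb{R}^n$).

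Next I would check that $f/g$ is bounded on a neighbourhood of $+\infty$: indeed $f(\tau)/g(\tau)=\varphi_0(\tau^{1/(s_1-s_0)})/\varphi_1(\tau^{1/(s_1-s_0)})$ for $\tau\geq1$, which is bounded near $+\infty$ precisely because $\varphi_0/\varphi_1$ is. Writing $X:=[H^{(s_0)}(\Omega),H^{(s_1)}(\Omega)]$ (an admissible couple), Proposition~\ref{prop6} then applies to $f,g,\psi$: the couple $[X_f,X_g]=[H^{\varphi_0}(\Omega),H^{\varphi_1}(\Omega)]$ is admissible, $\omega$ is an interpolation parameter, and $[X_f,X_g]_{\psi}=X_{\omega}$ with equality of norms, where $\omega(\tau):=f(\tau)\,\psi(g(\tau)/f(\tau))$. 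Unwinding, for $\tau\geq1$ one has $g(\tau)/f(\tau)=\varphi_1(\tau^{1/(s_1-s_0)})/\varphi_0(\tau^{1/(s_1-s_0)})$, so $\omega(\tau)=\tau^{-s_0/(s_1-s_0)}\varphi_0(\tau^{1/(s_1-s_0)})\,\psi\bigl(\varphi_1(\tau^{1/(s_1-s_0)})/\varphi_0(\tau^{1/(s_1-s_0)})\bigr)=\tau^{-s_0/(s_1-s_0)}\varphi(\tau^{1/(s_1-s_0)})$, i.e.\ $\omega$ is exactly the function built from $\varphi$ by \eqref{eq20}. Since $\omega$ is an interpolation parameter, Lemma~\ref{lem1} gives $\varphi\in RO$ and $X_{\omega}=[H^{(s_0)}(\Omega),H^{(s_1)}(\Omega)]_{\omega}=H^{\varphi}(\Omega)$ up to norm equivalence (equality of norms when $\Omega=\mathbb{R}^n$). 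Chaining the three equalities $[H^{\varphi_0}(\Omega),H^{\varphi_1}(\Omega)]_{\psi}=[X_f,X_g]_{\psi}=X_{\omega}=H^{\varphi}(\Omega)$ yields \eqref{eq24}; over $\mathbb{R}^n$ every link is an isometry, so the norms coincide.

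The only genuinely delicate point is bookkeeping of the change of variable $t\leftrightarrow\tau=t^{s_1-s_0}$ and checking that the composed parameter $\omega$ from Proposition~\ref{prop6} literally matches formula \eqref{eq20} applied to $\varphi$ — this is where a sign or exponent slip could creep in, so I would carry out that identification explicitly. Everything else is a formal concatenation: admissibility of the Sobolev couple, boundedness of $f/g$, and the fact that the class of interpolation parameters is closed under the operation $\omega(t)=f(t)\psi(g(t)/f(t))$, all of which are supplied by the cited results. One should also note that $\varphi\in RO$ can alternatively be seen directly from \eqref{eq5}: combining the bilateral estimates \eqref{eq5} for $\varphi_0,\varphi_1$ with \eqref{eq9} for $\psi$ gives \eqref{eq3} for $\varphi$, but invoking Lemma~\ref{lem1} is cleaner since it also delivers the interpolation identity in one stroke.
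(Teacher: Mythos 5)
Your proof is correct and follows essentially the same route as the paper: represent $H^{\varphi_{0}}(\Omega)$ and $H^{\varphi_{1}}(\Omega)$ over a common Sobolev couple via Theorem~\ref{th3}, apply the reiteration Proposition~\ref{prop6}, and identify the composed parameter $\omega$ with formula \eqref{eq20} applied to $\varphi$. The only (harmless) deviation is that you obtain $\varphi\in RO$ as a by-product of Lemma~\ref{lem1} applied to $\omega$, whereas the paper first verifies \eqref{eq3} for $\varphi$ directly from the estimates \eqref{eq5} for $\varphi_{0},\varphi_{1}$ and the pseudoconcavity inequality \eqref{eq9} for $\psi$ --- exactly the alternative you note at the end.
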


\begin{proof} Let us deduce \eqref{eq24} from \eqref{eq23} with the help of
the reiterated interpolation with a function parameter. Choose numbers
$s_{0},s_{1}\in\mathbb{R}$ such that $s_{0}<\sigma_{0}(\varphi_{j})$ and
$s_{1}>\sigma_{1}(\varphi_{j})$ for each $j\in\{0,1\}$. By Theorem~\ref{th3}, we
have
\begin{equation*}
[H^{(s_{0})}(\Omega),H^{(s_{1})}(\Omega)]_{\psi_{j}}=H^{\varphi_{j}}(\Omega)\quad
\mbox{for each}\quad j\in\{0,1\}.
\end{equation*}
Here the function $\psi_{j}\in\mathcal{B}$ is the interpolation parameter defined by
\eqref{eq20} with $\varphi=\varphi_{j}$. \textcolor{red}{Note} that $\psi_{0}/\psi_{1}$ is bounded in
a neighbourhood of $+\infty$. According to Proposition~\ref{prop6} and
Theorem~\ref{lem1}, we may write
\begin{align*}
[H^{\varphi_{0}}(\Omega),H^{\varphi_{1}}(\Omega)]_{\psi}&=
\bigl[\,[H^{(s_{0})}(\Omega),H^{(s_{1})}(\Omega)]_{\psi_{0}},
[H^{(s_{0})}(\Omega),H^{(s_{1})}(\Omega)]_{\psi_{1}}\,\bigr]_{\psi}\\
&=[H^{(s_{0})}(\Omega),H^{(s_{1})}(\Omega)]_{\omega}=H^{\varphi}(\Omega).
\end{align*}
Here the interpolation parameter $\omega\in\mathcal{B}$ satisfies the equality
\begin{align*}
\omega(\tau):&=\psi_{0}(\tau)\,\psi\Bigl(\frac{\psi_{1}(\tau)}{\psi_{0}(\tau)}\Bigr)\\&=
\tau^{-s_{0}/(s_{1}-s_{0})}\,\varphi_{0}(\tau^{1/(s_{1}-s_{0})})\,
\psi\Bigl(\frac{\varphi_{1}(\tau^{1/(s_{1}-s_{0})})}
{\varphi_{0}(\tau^{1/(s_{1}-s_{0})})}\Bigr)
\end{align*}
for $\tau\geq1$. Hence, the function
\begin{equation*}
\varphi(t):=\varphi_{0}(t)\,\psi\Bigl(\frac{\varphi_{1}(t)}{\varphi_{0}(t)}\Bigr)=
t^{s_{0}}\,\omega(t^{s_{1}-s_{0}}),\quad\quad t\geq1,
\end{equation*}
belongs to the class $\mathrm{OR}$ according to Theorem~\ref{lem1}. The equality of spaces is written up to equivalence of norms, with the equivalence becoming equality in the case of $\Omega=\mathbb{R}^{n}$. \qed
\end{proof}

Thus the extended Sobolev scale \eqref{eq22} is the final extension of the scale
$\{H^{(s)}(\Omega)\mid s\in\mathbb{R}\}$ by the interpolation within the category of
Hilbert spaces.

\section{Examples}\label{sec6}

Theorem \ref{th1} allows us to construct three explicit examples of Hilbert spaces that are intermediate for given couples of Sobolev spaces but differ considerably on their interpolation properties.

Namely, for the couples
\begin{equation}\label{f6.1}
[H^{(s_{0})}(\Omega),H^{(s_{1})}(\Omega)],\quad\mbox{with}\quad s_{0}\leq0<s_{1},
\end{equation}
we construct examples of intermediate spaces $H^{\varphi}(\Omega)$ that each of the following possibilities is realized. For every couple \eqref{f6.1}, the space $H^{\varphi}(\Omega)$
\begin{itemize}
\item [(i)] is an interpolation space;
\item [(ii)] is not an interpolation space;
\item [(iii)] is not an interpolation space provided that $s_{0}=0$, but is an interpolation space provided that $s_{0}<0$.
\end{itemize}

\begin{example}\label{ex1}
First we give an example of a Hilbert space $H^{\varphi}(\Omega)$ which is an interpolation (and intermediate) space for every couple \eqref{f6.1}. We obtain the required space $H^{\varphi}(\Omega)$ if we
take $\varphi(t):=1+\log t$ for $t\geq1$. This follows from Theorem \ref{th1}
because $\sigma_{0}(\varphi)=0=\sigma_{1}(\varphi)$ and the supremum in
$\eqref{eq6}$ is evidently attained.
\end{example}

\begin{example}\label{ex2}
Now we give an example of a Hilbert space $H^{\varphi}(\Omega)$ such that, for every couple \eqref{f6.1}, $H^{\varphi}(\Omega)$ is an intermediate space but is not an interpolation space. Let us put
$$
\varphi(t):=1+(1+\sin\log t)\log t\quad\mbox{for}\quad t\geq1
$$
and show that the space $H^{\varphi}(\Omega)$ is as required.

The corresponding radial function $\mu(\xi):=\varphi(\langle\xi\rangle)$ of
$\xi\in\mathbb{R}^{n}$ is a weight function, i.e. \eqref{eq2} is fulfilled. Indeed,
given $t,\tau\geq1$, we may write
\begin{equation}\label{f6.2}
\frac{\varphi(t)}{\varphi(\tau)}\leq
1+\frac{|\varphi(t)-\varphi(\tau)|}{\varphi(\tau)}\leq
1+|\varphi(t)-\varphi(\tau)|\leq 1+3\,|t-\tau|.
\end{equation}
Here we use the estimate $|\varphi'(\theta)|\leq3$ for every $\theta\geq1$, which is
directly verified. It follows from \eqref{f6.2} that the function
$\mu(\xi):=\varphi(\langle\xi\rangle)$ satisfies \eqref{eq2} with $c=3$ and $\ell=1$.
Thus, the H\"ormander space $H^{\varphi}(\Omega)$ is well-defined.

Choose numbers $s_{0}\leq0$ and $s_{1}>0$ arbitrarily. Since
$1\leq\varphi(t)\leq1+2\log t$ for $t\geq1$, we have the continuous embeddings $H^{(s_{1})}(\Omega)\hookrightarrow
H^{\varphi}(\Omega)\hookrightarrow H^{(s_{0})}(\Omega)$ by virtue
of \eqref{f2.2}. Note also that $\varphi\notin\mathrm{OR}$. Indeed, putting
$\lambda:=\exp(\pi/2)$ and $t_{k}:=\exp(2\pi k-\pi/2)$ with $k=1,2,3,\ldots$, we
have $\varphi(\lambda t_{k})/\varphi(t_{k})=1+2\pi k\rightarrow\infty$ as
$k\rightarrow\infty$, contrary to the property \eqref{eq5} of the class OR.
Therefore, by Theorem \ref{th1}, $H^{\varphi}(\Omega)$ is not an inter\-po\-la\-tion
space for the couple \eqref{f6.1}.

Let us argue the latter conclusion in more detail. If $H^{\varphi}(\Omega)$ were an
inter\-po\-la\-tion space for this couple, then
$H^{\varphi}(\Omega)=H^{\varphi_{1}}(\Omega)$ up to equivalence of norms for some
$\varphi_{1}\in\mathrm{OR}$ by Theorem \ref{th1}. Hence, $\varphi\asymp\varphi_{1}$ on $[1,\infty)$ (see Remark~\ref{rem}). This contradicts $\varphi\notin\mathrm{OR}$.
\end{example}

\begin{example}\label{ex3}
Finally, we give an example of a Hilbert space $H^{\varphi}(\Omega)$ such that, for each couple $[H^{(0)}(\Omega),H^{(s_{1})}(\Omega)]$ with $s_{1}>0$, $H^{\varphi}(\Omega)$ is an intermediate space but is not an interpolation space and, moreover, $H^{\varphi}(\Omega)$ is an interpolation space for every couple $[H^{(s_{0})}(\Omega),H^{(s_{1})}(\Omega)]$ with $s_{0}<0<s_{1}$.

Set $h(t):=(\log t)^{-1/2}\sin(\log^{1/4}t)$ and define the function
\begin{equation*}
\varphi(t):=
\begin{cases}
\;t^{h(t)}+\log t & \text{if}\;\;t\geq3, \\
\;1 & \text{if}\;\;0<t<3.
\end{cases}
\end{equation*}
Let us show that the space $H^{\varphi}(\Omega)$ is as required.

Evidently, $\varphi\in\mathcal{B}$. A simple calculation shows that
$t\varphi'(t)/\varphi(t)\rightarrow\nobreak0$ as $t\rightarrow\infty$. Hence
\cite[Section 1.2]{Seneta76}, the function $\varphi$ is slowly varying at infinity
in the sense of J.~Karamata; i.e.,
\begin{equation}\label{f6.3}
\lim_{t\rightarrow\infty}\,\frac{\varphi(\lambda\,t)}{\varphi(t)}=1\quad\mbox{for
each}\quad\lambda>0.
\end{equation}
Owing to Uniform Convergence Theorem \cite[Section 1.2, Theorem 1.1]{Seneta76}, the convergence in \eqref{f6.3} is uniform on every compact $\lambda$-set in $(0,\infty)$. Therefore $\varphi\in\mathrm{OR}$ and, moreover,
$\sigma_{0}(\varphi)=\sigma_{1}(\varphi)=\nobreak0$ (see \cite[Section
2.1]{BinghamGoldieTeugels89}). Thus the space $H^{\varphi}(\Omega)$ is well-defined.

Since both the functions $1/\varphi(t)$ and $\varphi(t)/t^{s_{1}}$ are bounded on $[1,\infty)$ for every $s_{1}>0$, the continuous embeddings $H^{(s_{1})}(\Omega)\hookrightarrow
H^{\varphi}(\Omega)\hookrightarrow H^{(0)}(\Omega)$ hold.

Next, we show that $\varphi$ is not pseudoconcave on $(r,\infty)$ whenever $r>0$.
Consider the sequences of numbers $t_{k}:=\exp((2\pi k+\pi/2)^{4})$ and
$s_{k}:=\exp((2\pi k+\pi)^{4})$, with $k=1,\,2,\,3,\ldots$. Straightforward calculations yield
$h(t_{k})=(2\pi k+\pi/2)^{-2}$ and $h(s_{k})=0$; hence,
\begin{equation*}
\log\varphi(t_{k})\geq h(t_{k})\log t_{k}=\Bigl(2\pi
k+\frac{\pi}{2}\Bigr)^{2}\quad\mbox{and}\quad\varphi(s_{k})=1+(2\pi k+\pi)^{4}.
\end{equation*}
Therefore,
\begin{equation*}
\frac{\varphi(t_{k})}{\varphi(s_{k})}\geq\frac{\exp((2\pi k+\pi/2)^{2})}{(1+(2\pi
k+\pi)^{4})}\rightarrow\infty\quad\mbox{as}\quad k\rightarrow \infty.
\end{equation*}
But $t_{k}<s_{k}$, then, by Proposition~\ref{prop5}, the function $\varphi$ is not
pseudoconcave on $(r,\infty)$ whenever $r>0$.

Applying this fact, we will prove that $H^{\varphi}(\Omega)$ is not an interpolation
space for the couple $[H^{(0)}(\Omega),H^{(1)}(\Omega)]$. Suppose the contrary;
then, by Proposition~\ref{prop3}, we may write
$H^{\varphi}(\Omega)=[H^{(0)}(\Omega),H^{(1)}(\Omega)]_{\psi}$ (up to equivalence of norms) for some function parameter $\psi\in\mathcal{B}$ which is pseudoconcave
in a neighbourhood of~$+\infty$. Hence, $H^{\varphi}(\Omega)=H^{\psi}(\Omega)$ by
Theorem~\ref{th4} applied to the functions $\varphi_{0}(t)\equiv t^{0}$ and $\varphi_{1}(t)\equiv t^{1}$. As we have noted in Remark~\ref{rem}, the equality $H^{\varphi}(\Omega)=H^{\psi}(\Omega)$ is equivalent to the property $\varphi\asymp\psi$ on $[1,\infty)$. Hence, $\varphi$ is pseudoconcave in a neighbourhood of~$+\infty$, a contradiction.

Thus, $H^{\varphi}(\Omega)$ is not an interpolation space for the couple
$$
\bigl[H^{(0)}(\Omega),H^{(1)}(\Omega)\bigr].
$$
It follows from this and Theorem \ref{th1} (see Remark~\ref{rem1}) that the supremum in $\eqref{eq6}$ is not attained (recall that $\sigma_{0}(\varphi)=0=\sigma_{1}(\varphi)$). Hence, by the
same theorem, $H^{\varphi}(\Omega)$ is not an interpolation space for the couple $[H^{(0)}(\Omega),H^{(s_{1})}(\Omega)]$ whenever $s_{1}>0$, but is an interpolation space for each couple \eqref{f6.1} provided that
$s_{0}<0$.
\end{example}


\begin{thebibliography}{99}

\bibitem{BerghLefstrem76}
Bergh, J., L\"ofstr\"om, J.: Interpolation Spaces. Springer, Berlin (1976)

\bibitem{BinghamGoldieTeugels89}
Bingham, N.H., Goldie, C.M., Teugels, J.L.: Regular Variation. Cambridge University Press, Cambridge (1989)

\bibitem{Boyd67}
Boyd D.W.: The Hilbert transform on rearrangement-invariant spaces. Canadian~J. Math. \textbf{19}, 599--616 (1967)

\bibitem{CobosFernandez88}
Cobos, F., Fernandez, D.L.: Hardy-Sobolev spaces and Besov spaces with a function
parameter. In: Cwikel, M. etc. (eds.) Function spaces and applications (Lecture
Notes in Math., Vol. 1302), pp.~158--170. Springer, Berlin (1988)

\bibitem{Donoghue67}
Donoghue, W.F.: The interpolation of quadratic norms.  Acta Math.
\textbf{118}(3--4), 251--270 (1967)

\bibitem{FoiasLions61}
Foia\c{s}, C., Lions, J.-L.: Sur certains th\'eor\`emes d'interpolation.
Acta Sci. Math. (Szeged) \textbf{22}(3--4), 269--282 (1961)

\bibitem{Hermander63}
H\"ormander, L.: Linear Partial Differential Operators. Springer, Berlin (1963)

\bibitem{Hermander83}
H\"ormander, L.: The Analysis of Linear Partial Differential Operators, Vol.~II.
Differential Operators with Constant Coefficients. Springer, Berlin (1983)

\bibitem{KreinPetuninSemenov82}
Krein, S.G., Petunin, Yu.L., Sem\"enov, E.M.: Interpolation of Linear
Operators. American Mathematical Society, Providence, RI (1982)

\bibitem{LionsMagenes72}
Lions, J.-L., Magenes, E.: Non-Homogeneous Boundary-Value Problems and
Applications, Vol.~I. Springer, Berlin (1972)

\bibitem{Matuszewska64}
Matuszewska, W.: On a generalization of regularly increasing functions.
Studia Math. \textbf{24}, 271--279 (1964)

\bibitem{Merucci84}
Merucci, C.: Application of interpolation with a function parameter to Lorentz,
Sobolev and Besov spaces. In: Cwikel,~M., Peetre,~J. (eds.) Interpolation spaces and
allied topics in analysis (Lecture Notes in Math., Vol. 1070), pp.~183--201.
Springer, Berlin (1984)

\bibitem{06UMJ3}
Mikhailets, V.A., Murach, A.A.: Improved scales of spaces and elliptic
boundary-value problems, II. Ukrainian. Math. J. \textbf{58}(3), 398--417 (2006)

\bibitem{08MFAT1}
Mikhailets, V.A., Murach, A.A.: Interpolation with a function parameter and
refined scale of spaces. Methods Funct. Anal. Topology \textbf{14}(1), 81--100 (2008)

\bibitem{12BJMA2}
Mikhailets, V.A., Murach, A.A.: The refined Sobolev scale, interpolation,
and elliptic problems. Banach J. Math. Anal. \textbf{6}(2), 211--281 (2012)

\bibitem{MikhailetsMurach14}
Mikhailets, V.A., Murach, A.A.: H\"ormander Spaces, Interpolation, and Elliptic Problems. De Gruyter, Berlin (2014) (Russian version is available as arXiv:1106.3214)

\bibitem{Ovchinnikov84}
Ovchinnikov, V.I.: The methods of orbits in interpolation theory. In: Mathematical
Reports, Vol.~1, Part.~2, pp.~349--515. Harwood Academic Publishers (1984)

\bibitem{Peetre68}
Peetre, J.: On interpolation functons,~II. Acta Sci. Math. (Szeged)
\textbf{29}(1), 91--92 (1968)

\bibitem{Rychkov99}
Rychkov,~V.S.: On restrictions and extensions of the Besov and Triebel--Lizorkin spaces with respect to Lipschitz domain. J. London Math. Soc. \textbf{60}(1), 237--257 (1999)

\bibitem{Schechter67}
Schechter,~M.: Complex interpolation. Compos. Math. \textbf{18}(1--2), 117--147 (1967)

\bibitem{Seeley64}
Seeley,~R.: Extensions of $C^{\infty}$ functions defined in a half space. Proc. Amer. Math. Soc. \textbf{15}, 625--626 (1964)

\bibitem{Seneta76}
Seneta,~E.: Regularly Varying Functions. Springer, Berlin (1976)

\bibitem{Triebel95}
Triebel, H.: Interpolation Theory, Function Spaces, Differential Operators. Johann Ambrosius Barth, Heidelberg (1995, 2-nd edn.)

\bibitem{Triebel06}
Triebel, H.: Theory of Function Spaces III. Birkh\"auser, Basel (2006)

\bibitem{VolevichPaneah65}
Volevich,~L.R., Paneah, B.P.: Certain spaces of generalized functions and
embedding theorems. Uspehi Mat. Nauk \textbf{20}(1), 3--74 (1965) (Russian; English translation in: Russian Math. Surveys \textbf{20}(1), 1--73 (1965))


\end{thebibliography}
\end{document}